\PassOptionsToPackage{backref=page}{hyperref}
\documentclass[11pt,letterpaper,reqno]{amsart}

\usepackage[T1]{fontenc}
\usepackage{lmodern}
\usepackage{amsmath,amssymb,amsfonts,amsthm}
\usepackage{mathtools}
\usepackage{aliascnt}
\usepackage{microtype}
\usepackage{enumitem}
\usepackage{booktabs}
\usepackage{xcolor}
\usepackage{doi}
\usepackage{hyperref}
\usepackage{tikz}
\usetikzlibrary{calc}
\usetikzlibrary{positioning,arrows.meta}
\usepackage{bookmark}
\usepackage[capitalize,noabbrev]{cleveref}

\hypersetup{
	pdftitle={Obstructions to Jacobi-Finiteness of Quivers with Potentials},
	pdfauthor={Wen Chang and Quanyu Tang},
	pdfsubject={Quivers with potentials, Jacobian algebras, and Golod--Shafarevich--Vinberg inequalities},
	pdfkeywords={quivers with potentials, Jacobian algebras, Jacobi-finiteness, Golod--Shafarevich--Vinberg inequality, completed path algebras},
	pdfstartview={FitH},
	colorlinks=true,
	linkcolor=blue!55!black,
	citecolor=green!35!black,
	urlcolor=blue!55!black
}
\renewcommand*{\backref}[1]{}
\renewcommand*{\backrefalt}[4]{%
	\ifcase #1
	\or
	\quad$\hookleftarrow$, cited on page~#2%
	\else
	\quad$\hookleftarrow$, cited on pages~#2%
	\fi
}

\newtheorem{theorem}{Theorem}[section]

\newaliascnt{lemma}{theorem}
\newtheorem{lemma}[lemma]{Lemma}
\aliascntresetthe{lemma}

\newaliascnt{proposition}{theorem}
\newtheorem{proposition}[proposition]{Proposition}
\aliascntresetthe{proposition}

\newaliascnt{corollary}{theorem}
\newtheorem{corollary}[corollary]{Corollary}
\aliascntresetthe{corollary}

\newaliascnt{conjecture}{theorem}

\aliascntresetthe{conjecture}

\newaliascnt{problem}{theorem}

\aliascntresetthe{problem}

\newaliascnt{question}{theorem}
\newtheorem{question}[question]{Question}
\aliascntresetthe{question}

\theoremstyle{definition}
\newaliascnt{definition}{theorem}

\aliascntresetthe{definition}

\newaliascnt{example}{theorem}
\newtheorem{example}[example]{Example}
\aliascntresetthe{example}

\newaliascnt{remark}{theorem}
\newtheorem{remark}[remark]{Remark}
\aliascntresetthe{remark}

\crefname{theorem}{Theorem}{Theorems}
\Crefname{theorem}{Theorem}{Theorems}
\crefname{lemma}{Lemma}{Lemmas}
\Crefname{lemma}{Lemma}{Lemmas}
\crefname{proposition}{Proposition}{Propositions}
\Crefname{proposition}{Proposition}{Propositions}
\crefname{corollary}{Corollary}{Corollaries}
\Crefname{corollary}{Corollary}{Corollaries}
\crefname{conjecture}{Conjecture}{Conjectures}
\Crefname{conjecture}{Conjecture}{Conjectures}

\DeclareMathOperator{\rad}{rad}
\DeclareMathOperator{\ord}{ord}

\newcommand{\m}{\mathfrak m}
\newcommand{\rA}{\mathfrak r}
\newcommand{\one}{\mathbf 1}
\newcommand{\widehatKQ}{\widehat{KQ}}
\newcommand{\Jac}{\mathcal J}

\title{Obstructions to Jacobi-Finiteness of Quivers with Potentials}

\author[W.~Chang]{Wen Chang}
\address{School of Mathematics and Statistics, Shaanxi Normal University,
	Xi'an 710062, P.~R.~China}
\email{changwen161@163.com}

\author[Q.~Tang]{Quanyu Tang}
\address{School of Mathematical Sciences, University of Science and Technology
	of China, Hefei 230026, P.~R.~China}
\email{tangquanyu827@gmail.com}

\subjclass[2020]{Primary 16G20; Secondary 16S15, 16P90}
\keywords{Quivers with potentials, Jacobian algebras, Jacobi-finiteness,
	Golod--Shafarevich--Vinberg inequality, completed path algebras}
\date{}

\begin{document}
	
	\begin{abstract}
		We show that Jacobi-finite potentials need not exist on finite
		$2$-acyclic quivers.
		Our main tool is a matrix-valued
		Golod--Shafarevich--Vinberg inequality for quotients of completed path
		algebras by finitely many, possibly nonhomogeneous, topological relations.
		Applied to cyclic derivatives, it yields a potential-dependent obstruction
		to the finite-dimensionality of completed Jacobian algebras.  We then construct a purely quiver-level criterion excluding every
		Jacobi-finite potential on a given quiver, and exhibit a family of quivers for which every potential has an infinite-dimensional Jacobian algebra. 
	\end{abstract}
	
	\maketitle
	\setcounter{tocdepth}{2} 
	
	\tableofcontents
	\section{Introduction}
	
	Cluster algebras were introduced by Fomin and Zelevinsky in
	\cite{FominZelevinsky}.  In the skew-symmetric setting, the exchange data of
	a seed are encoded by a finite quiver without loops or oriented $2$-cycles,
	and mutation transforms this quiver by a local combinatorial rule. 
	
	For representation-theoretic purposes, however, the quiver alone does not
	record the relations one wants to mutate.  Derksen, Weyman and Zelevinsky
	resolved this by introducing \emph{quivers with potentials}
	\cite{DerksenWeymanZelevinsky}.  A quiver with potential is a pair $(Q,W)$,
	where $W$ is a possibly infinite linear combination of oriented cycles in
	the completed path algebra of the quiver $Q$. The
	cyclic derivatives $\partial_aW$, one for each arrow $a$, generate the
	Jacobian ideal, and their quotient defines the \emph{completed Jacobian algebra}
	$\Jac(Q,W)$. Mutation of quivers with potentials lifts ordinary quiver
	mutation and keeps track of the relations throughout the mutation process. 
	
	Several conditions on a potential play different roles.  A potential is
	\emph{non-degenerate} if every finite sequence of 
	mutations of a quiver with potential has a $2$-acyclic reduced quiver.  A stronger condition is
	\emph{rigidity}: every oriented cycle is cyclically equivalent to an element
	of the Jacobian ideal.  Rigidity implies non-degeneracy
	\cite{DerksenWeymanZelevinsky}.  By contrast, a potential is
	\emph{Jacobi-finite} when $\Jac(Q,W)$ is finite-dimensional.  Over an
	uncountable field, every finite $2$-acyclic quiver admits a non-degenerate
	potential \cite[Corollary~7.4]{DerksenWeymanZelevinsky}; the question studied
	here is whether one can always choose such a potential to be Jacobi-finite as
	well.
	
	Jacobian algebras and quiver-with-potential mutation have become central in
	representation theory.  Amiot used Jacobi-finite quivers with potentials to
	construct Hom-finite generalized cluster categories \cite{Amiot}.
	Ginzburg's differential graded algebra provides a Calabi--Yau enhancement
	\cite{Ginzburg}, and Keller and Yang proved that mutation induces derived
	equivalences between the corresponding differential graded algebras
	\cite{KellerYang}.  The representation type of finite-dimensional Jacobian
	algebras and its behaviour under mutation were studied systematically by
	Gei{\ss}, Labardini-Fragoso and Schr\"oer
	\cite{GeissLabardiniSchroer}.  These developments make the simultaneous
	existence of non-degeneracy and Jacobi-finiteness a natural structural
	problem.

    There are substantial positive results. Non-degenerate Jacobi-finite potentials exist on quivers mutation-equivalent to acyclic quivers \cite{DerksenWeymanZelevinsky,GeissLabardiniSchroer}. They also exist on quivers arising from triangulated surfaces in the sense of Fomin--Shapiro--Thurston \cite{FominShapiroThurston}; see \cite{LabardiniFragoso,LabardiniFragosoIV,LadkaniClosedSurfaces,TrepodeValdivieso,GeuenichLabardiniMiranda}. More generally, the skew-symmetric quivers of finite mutation type were classified by Felikson, Shapiro and Tumarkin \cite{FeliksonShapiroTumarkin}, and the finite-mutation-type case is treated in \cite{HaerizadehYurikusa}. Further positive results are known for a class of quivers introduced by Kontsevich and Soibelman, which is closed under mutation and triangular extensions \cite{LadkaniClassP}, and for quivers associated with Grassmannian cluster algebras \cite{ChangZhang}. In a symplectic-geometric setting, rigid quivers with potentials were
	constructed from Legendrian links \cite{CasalsGao}.

	However, nondegeneracy does not by itself imply Jacobi-finiteness.
	Gei{\ss}, Labardini-Fragoso and Schr\"oer exhibited quivers carrying
	non-degenerate potentials with different finiteness and
	representation-theoretic behaviour \cite{GeissLabardiniSchroer}.  On the mutation-finite 
	exceptional quiver $X_7$, Ladkani constructed explicit non-degenerate
	potentials and showed that Jacobi-finiteness can depend on the characteristic
	of the ground field \cite{LadkaniXSeven}.  Li and Peng obtained
	Jacobi-finite potentials on a new class of $2$-acyclic quivers by a covering
	construction, while leaving the corresponding non-degeneracy assertion as a
	conjecture \cite{LiPeng}.  Accordingly, the generic existence of a
	non-degenerate potential does not by itself control the dimension of the
	Jacobian algebra.  Thus the following question is natural; see Schr\"oer's \emph{Atlas of finite-dimensional algebras} \cite[Question~4.55]{SchroerAtlas}.
	
	\begin{question}
		\label{question:schroer}
		Let $Q$ be a finite $2$-acyclic quiver.  Does there always exist a
		non-degenerate potential $W$ on $Q$ such that $\dim_{\mathbb C}\Jac(Q,W)$ is
		finite?
	\end{question}
	
	We give a negative answer to this question.  Our argument is a completed, multi-vertex form of
	the Golod--Shafarevich method.  Let's recall the two parts of the classical
	picture that are relevant here.
	
	In the scalar one-vertex setting, the original Golod--Shafarevich inequality
	concerns homogeneous relations in a free associative algebra
	\cite{GolodShafarevich,Golod}.  Vinberg extended the method to
	nonhomogeneous relations, measuring a relation by the degree of its lowest
	nonzero homogeneous component.  His formal-power-series version
	treats quotients of algebras of noncommutative formal power series by
	topologically generated ideals \cite{Vinberg}.  
	
	A second line of development retains the vertex idempotents.  Etingof and Eu
	proved a matrix Golod--Shafarevich inequality over a finite-dimensional
	semisimple base for quadratic graded relations
	\cite{EtingofEu}, and Gaddis and Rogalski extended the method
	to homogeneous relations of arbitrary degrees \cite{GaddisRogalski}.
	There are also stronger scalar estimates exploiting the fact that the
	relations are cyclic derivatives of a single potential.  Such refinements
	were developed by Iyudu and Smoktunowicz and by Iyudu and Shkarin
	\cite{IyuduSmoktunowicz,IyuduShkarin}; the latter also treats
	nonhomogeneous potentials and completions.  Brown and Wemyss record both the
	scalar completed Golod--Shafarevich--Vinberg theorem and a
	potential-specific strengthening in \cite{BrownWemyss}.
	
	The setting needed in this paper combines the two directions: we work with a
	completed path algebra with several vertices and possibly nonhomogeneous
	formal relations, while retaining both endpoint idempotents of each
	relation.  The resulting finite-dimensionality criterion is the following.
	The notation appearing in the statement is fixed precisely in
	Section~\ref{sec:matrix-gs}.
	
	\begin{theorem}
		\label{theorem:completed-matrix-gs}
Let $Q$ be a finite quiver with $n$ vertices, and let $M$ be its
adjacency matrix. Let
$A=\widehat{KQ}/\mathcal I$ be defined by finitely many
endpoint-homogeneous relations $r_1,\ldots,r_m$ in
$\mathfrak m^2$.
For $0<\rho<1$, let $R(\rho)$ be
		the matrix recording the $\mathfrak m$-adic orders and the two endpoint
		idempotents of these relations.  If $A$ is finite-dimensional, then there
		exists $H(\rho)\in\mathbb R_{\geq1}^{n}$ such that
		\begin{equation}
			\label{eq:completed-matrix-gs}
			\one\leq
			\bigl({\bf I}_{n}-\rho M^{\top}+R(\rho)\bigr)H(\rho),
		\end{equation}
		where $\bf 1$ is a column vector with entries one, and $\bf I_n$ is the $n\times n$ identity matrix. 
	\end{theorem}

	Applying Theorem~\ref{theorem:completed-matrix-gs} to the cyclic derivatives
	of a potential gives a potential-dependent obstruction, see Theorem \ref{theorem:potential-order}.  We then remove the
	dependence on the potential, see
	Theorem~\ref{theorem:return-obstruction}.
	The concrete consequences proved in Section~\ref{sec:examples} may be
	summarized as follows.
	
	\begin{theorem}
		\label{theorem:main-counterexamples}
		For every field $K$, none of the following quivers admits a Jacobi-finite
		potential:
		\begin{enumerate}[label=\textup{(\roman*)},leftmargin=2.2em]
        \item a $2$-acyclic $d$-regular quiver with $d\geq4$, where $d$-regular means there are $d$ arrows in/out at each vertex, see Proposition~\ref{proposition:regular-quivers};
			\item a quiver $Q_{2,n}$ with $n\geq 10$, where $Q_{2,n}$ is the quiver with vertex set $\mathbb Z/n\mathbb Z$ and arrows $i\longrightarrow i+s$ for each $i\in\mathbb Z/n\mathbb Z$ and $s\in\{1,2\}$, where the labels are read modulo $n$, see Proposition~\ref{proposition:q-n}.\end{enumerate}
	\end{theorem}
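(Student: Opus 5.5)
The plan is to derive both parts from Theorem~\ref{theorem:completed-matrix-gs}, applied to the relations $r_a=\partial_aW$, one for each arrow $a$. The argument is a dual-vector contradiction. Suppose $\Jac(Q,W)$ is finite-dimensional and let $H=H(\rho)\geq\one$ be the vector given by the theorem. If we find $\rho\in(0,1)$ and a nonzero $v\in\mathbb R_{\geq0}^n$ with
\[
v^{\top}\bigl({\bf I}_n-\rho M^{\top}+R(\rho)\bigr)\leq 0
\]
entrywise, then $0<v^{\top}\one\leq v^{\top}({\bf I}_n-\rho M^{\top}+R(\rho))H\leq0$, which is a contradiction. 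Nonnegativity of $H$ is what lets us pass from the row inequality to the contradiction.

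First we bound $R(\rho)$ using only the quiver. For an arrow $a\colon i\to j$, the derivative $\partial_aW$ lies in $e_j\widehat{KQ}e_i$ (up to the paper's orientation convention), so it is endpoint-homogeneous. Since $Q$ is $2$-acyclic, we may assume after cyclic equivalence that $W$ has no cycles of length $\le 2$, so $\partial_aW\in\m^2$. More precisely, $\ord\partial_aW\geq \ell(a)-1$, where $\ell(a)$ is the minimal length of an oriented cycle through $a$. If $\partial_aW=0$ the relation is simply omitted, which only decreases $R$. Since $0<\rho<1$, we get the entrywise bound
\[
R(\rho)\leq \sum_a \rho^{\ell(a)-1}E_a,
\]
where $E_a$ is the matrix unit at the endpoint pair of $a$, read in the reversed direction. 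Summing over all arrows of one type, these matrix units add up to $M$ or its transpose. The exact orientation does not matter for the all-ones test vector on regular quivers, because there row sums and column sums agree.

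For (i), take $v=\one$. Since every in-degree and out-degree equals $d$, each column sum of $M^{\top}$ and of $\sum_aE_a$ is $d$. With $\ell(a)\ge3$ we obtain
\[
\one^{\top}\bigl({\bf I}_n-\rho M^{\top}+R(\rho)\bigr)\leq(1-d\rho+d\rho^2)\one^{\top}.
\]
At $\rho=1/2$ the coefficient is $1-d/4\le0$ exactly when $d\ge4$, which proves (i). For (ii), $Q_{2,n}$ is only $2$-regular, so we must use long cycles. A cycle has steps in $\{1,2\}$ summing to a positive multiple of $n$. A cycle through a $1$-arrow therefore has length at least $1+\lceil (n-1)/2\rceil$, and a cycle through a $2$-arrow has length at least $1+\lceil (n-2)/2\rceil$. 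Taking $v=\one$ again, by rotational symmetry every column gets one relation of each type, so the coefficient becomes
\[
f_n(\rho)=1-2\rho+\rho^{\lceil (n-1)/2\rceil}+\rho^{\lceil (n-2)/2\rceil}.
\]
For $n=10$ this is $1-2\rho+\rho^5+\rho^4$, and a numerical check gives $f_{10}(0.62)<0$. Since $f_n$ is decreasing in $n$ for fixed $\rho$, the case $n\geq10$ follows.

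The main obstacle is the bookkeeping in $R(\rho)$: the orientation convention (transpose or not), and making sure that the order bound $\ell(a)-1$ is legitimate after replacing $W$ by a cyclically equivalent reduced potential. I also need to verify that the margin at $n=10$ is genuinely negative, since the margin is thin. If the direct estimate at $n=10$ fails, I would instead invoke the potential-free criterion of Theorem~\ref{theorem:return-obstruction}, which is designed to encode exactly these return lengths.
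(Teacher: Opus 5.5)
Your proposal is correct and is essentially the paper's argument: apply the matrix inequality to the cyclic derivatives and bound their orders by return distances (your $\ell(a)-1$ is the paper's $\ell_Q(a)$). Then test with $u=\one$, at $\rho=1/2$ for (i) and, for (ii), at a $\rho$ near $2/3$. The paper takes $\rho=2/3$, giving $f_{10}(2/3)=-1/243$, a wider margin than your still valid $f_{10}(0.62)\approx-6.2\cdot10^{-4}$.

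The obstacles you flag are not real. A $2$-acyclic quiver has no oriented cycles of length at most $2$. The order bound holds for every potential, since cancellation can only raise the $\mathfrak m$-adic order. The orientation convention is irrelevant for $\one^{\top}$ on these in/out-regular quivers.
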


	Finally, over an uncountable field every finite $2$-acyclic quiver admits
	a non-degenerate potential
	\cite[Corollary~7.4]{DerksenWeymanZelevinsky}.  In particular,
	the quivers appearing in the above theorem admit non-degenerate potentials over $\mathbb C$, while the theorem says that none of their potentials is Jacobi-finite.
	Thus these quivers give a negative answer to Question~\ref{question:schroer}.	
	
	The paper is organized as follows.  Section~\ref{sec:preliminaries} fixes our
	conventions for completed path algebras and quivers with potentials and
	records a filtered spanning estimate.  Section~\ref{sec:matrix-gs} proves
	Theorem~\ref{theorem:completed-matrix-gs}.  In
	Section~\ref{sec:jacobian} we specialize the inequality first to cyclic
	derivatives and then to directed return distances.  Finally,
	Section~\ref{sec:examples} establishes the two classes of counterexamples
	listed in Theorem~\ref{theorem:main-counterexamples}.
	
	\section{Preliminaries}
	\label{sec:preliminaries}
	
	We begin by fixing the conventions used throughout the paper.  The final subsection records the elementary filtered estimate that will convert a spanning family in the relation module into the matrix inequality of Theorem~\ref{theorem:completed-matrix-gs}.
	
	\subsection{Completed path algebras and conventions}
	
	Let $K$ be a field and let $Q=(Q_0,Q_1,s,t)$ be a finite quiver.  We assume
	throughout that $Q_0=\{1,2,\ldots,n\}$.  For an arrow $a$, its source and
	target are denoted by $s(a)$ and $t(a)$, respectively, and we will denote the arrow by $a: s(a)\rightarrow t(a)$.  Set
	\[
	S=\bigoplus_{i\in Q_0}Ke_i,
	\qquad
	V=\bigoplus_{a\in Q_1}Ka,
	\]
	and regard $V$ as an $S$-bimodule by the convention
	$a\in e_{t(a)}Ve_{s(a)}$.  Thus paths are composed from right to left: if $p=a_d\cdots a_1$, then $a_1$ is traversed first. Denote by $KQ$ the \emph{path algebra} associated to $Q$, where a path from $i$ to $j$ belongs to $e_jKQe_i$.
	
	The \emph{completed path algebra} of $Q$ is
	\[
	\widehatKQ=\prod_{d\geq0}V^{\otimes_S d},
	\]
	where $V^{\otimes_S0}=S$. Its \emph{arrow ideal} is
	$\m=\prod_{d\geq1}V^{\otimes_S d}$, and
	$\m^q=\prod_{d\geq q}V^{\otimes_Sd}$ for $q\geq1$. Thus an element of $\widehatKQ$ is a possibly infinite linear combination of
	paths, and convergence means coefficientwise convergence in increasing path
	length.  All closures and continuity statements below refer to the
	$\m$-adic topology.
	
	For $x\in\widehatKQ$, define its $\m$-adic order by
	$\ord_{\m}(x)=\sup\{q\geq0\mid x\in\m^q\}$, with
	$\ord_{\m}(0)=\infty$.
	We use the convention $\rho^\infty=0$ for $0<\rho<1$.
	
	\subsection{Quivers with potentials}
	
	We next recall the quiver-with-potential terminology needed for the applications in Sections~\ref{sec:jacobian} and~\ref{sec:examples}. We refer the reader to \cite{DerksenWeymanZelevinsky} for details. When discussing quivers with potentials and their mutations, we assume that
	$Q$ has no loops.  Let
	$\operatorname{Pot}(Q)$ be the closed $K$-subspace of $\widehatKQ$ spanned by
	all oriented cycles.  A \emph{potential} on $Q$ is an element
	$W\in\operatorname{Pot}(Q)$.  Potentials are considered up to
	\emph{cyclic equivalence}: two potentials are cyclically equivalent if their
	difference belongs to the closure of the subspace spanned by $a_d\cdots a_2a_1-a_1a_d\cdots a_2$, where $a_d\cdots a_1$ ranges over all oriented cycles.  In other words, the
	choice of a starting point on a cyclic monomial is immaterial.  A
	\emph{quiver with potential}, or QP, is a pair $(Q,W)$.
	
	For an arrow $a\in Q_1$, the \emph{cyclic derivative} $\partial_a:\operatorname{Pot}(Q)\longrightarrow\widehatKQ$ is the unique continuous $K$-linear map determined on a cyclic path
	$p=a_d\cdots a_1$ by
	\[
	\partial_a p
	=\sum_{\substack{1\leq q\leq d\\a_q=a}}
	a_{q-1}\cdots a_1a_d\cdots a_{q+1},
	\]
	where empty products are omitted.  Thus one cuts the cycle immediately after an occurrence of $a$ and then deletes that occurrence.  Since
	$\partial_a(\m^q)\subseteq\m^{q-1}$, cyclic differentiation is continuous.
	If $a:i\to j$, then $\partial_aW\in e_i\widehatKQ e_j$; it is a possibly infinite linear combination of paths from $j$ back to $i$.
	
	The \emph{Jacobian ideal} and the \emph{completed Jacobian algebra} of
	$(Q,W)$ are
	\[
	\mathcal I(W)
	=\overline{\sum_{a\in Q_1}
		\widehatKQ(\partial_aW)\widehatKQ},
	\qquad
	\Jac(Q,W)=\widehatKQ/\mathcal I(W).
	\]
	The closure is essential because both the potential and the relations may be
	infinite formal sums.  The QP $(Q,W)$, or simply the potential $W$, is called
	\emph{Jacobi-finite} if $\Jac(Q,W)$ is finite-dimensional over $K$.
	
	The natural equivalence relation on QPs is right-equivalence.  Suppose that
	$Q$ and $Q'$ have the same vertex set.  A \emph{right-equivalence} from
	$(Q,W)$ to $(Q',W')$ is a continuous $K$-algebra isomorphism $\varphi:\widehat{KQ}\longrightarrow\widehat{KQ'}$ which fixes every vertex idempotent and for which $\varphi(W)$ is cyclically
	equivalent to $W'$.  Right-equivalent QPs have isomorphic completed Jacobian
	algebras.  A QP is called \emph{reduced} if its potential has no quadratic
	part, and \emph{trivial} if its potential is quadratic and its Jacobian
	algebra is $S$.  The splitting theorem of
	\cite{DerksenWeymanZelevinsky} states that every QP is right-equivalent to a
	direct sum of a reduced QP and a trivial QP, and that the right-equivalence
	class of the reduced part is uniquely determined.
	
	For completeness, we also recall how mutation uses this reduction.  Let
	$(Q,W)$ be reduced, and let $k\in Q_0$ be a vertex not incident with an oriented $2$-cycle. For every length-two path $j\xrightarrow{\,b\,}k\xrightarrow{\,a\,}i$, the premutated quiver contains a new arrow $[ab]:j\to i$, and every arrow
	incident with $k$ is replaced by an arrow in the opposite direction.  If
	$[W]$ denotes the potential obtained by replacing each occurrence of a
	subpath $ab$ through $k$ by $[ab]$, then the premutated potential is
	\[
	\widetilde W=[W]+\sum_{a,b}[ab]b^*a^*,
	\]
	where $a^*$ and $b^*$ are the reversed arrows and the sum runs over all such
	pairs $(a,b)$.  The \emph{mutation} $\mu_k(Q,W)$ is, by definition, the reduced part
	of the premutated QP $(\widetilde Q,\widetilde W)$. If the underlying quiver of $\mu_k(Q,W)$ is $2$-acyclic, then it is
the ordinary quiver mutation $\mu_k(Q)$.
	
	A quiver is \emph{$2$-acyclic} if it has neither loops nor oriented
	$2$-cycles.  A potential $W$ on a $2$-acyclic quiver $Q$ is
	\emph{non-degenerate} if, after every finite sequence of QP mutations, the
	underlying reduced quiver remains $2$-acyclic.  Finally, $(Q,W)$ is
	\emph{rigid} if every oriented cycle in $Q$ is cyclically equivalent to an
	element of the Jacobian ideal $\mathcal I(W)$.  Every rigid QP is
	non-degenerate \cite{DerksenWeymanZelevinsky}.  Since every oriented cycle in
	a $2$-acyclic quiver has length at least $3$, one has
	$\partial_aW\in\m^2$ for every arrow $a$.

	\subsection{A filtered spanning estimate}\label{subsec:filtered-spanning-estimate}
	
	The proof of the main matrix inequality will use only the following elementary
	observation about finite filtrations.  We record it separately so that the
	later dimension count remains transparent.
	
	Let
	$X=F^0X\supseteq F^1X\supseteq F^2X\supseteq\cdots$
	be a decreasing filtration of a finite-dimensional $K$-vector space, with
	$F^qX=0$ for all sufficiently large $q$.  For $0<\rho<1$, set
	\[
	h_\rho(X)=\sum_{q\geq0}
	\dim_K(F^qX/F^{q+1}X)\rho^q.
	\]
	
	\begin{lemma}
		\label{lemma:filtered-spanning}
		Suppose that a finite family $(z_\lambda)_{\lambda\in\Lambda}$ spans $X$,
		and that $z_\lambda\in F^{d_\lambda}X$ for integers $d_\lambda\geq0$.
		Then
		\[
		h_\rho(X)\leq\sum_{\lambda\in\Lambda}\rho^{d_\lambda}.
		\]
	\end{lemma}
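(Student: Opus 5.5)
We will prove Lemma~\ref{lemma:filtered-spanning} by building a basis of $X$ adapted to the filtration out of the spanning family, and then comparing the two sums term by term. For each $\lambda$, the element $z_\lambda$ lies in $F^{d_\lambda}X$. Let $e_\lambda\geq d_\lambda$ be the largest integer with $z_\lambda\in F^{e_\lambda}X$. If $z_\lambda=0$ we discard it; this only decreases the right-hand side, since all terms $\rho^{d_\lambda}$ are positive. Because $0<\rho<1$, we have $\rho^{e_\lambda}\leq\rho^{d_\lambda}$. It therefore suffices to prove
\[
h_\rho(X)\leq\sum_{\lambda}\rho^{e_\lambda},
\]
and this bound holds once we show that, for each $q$, the number of $\lambda$ with $e_\lambda=q$ is at least $\dim_K(F^qX/F^{q+1}X)$.

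The key claim is that, for each $q$, the images in $F^qX/F^{q+1}X$ of the elements $z_\lambda$ with $e_\lambda=q$ span this quotient. We prove it by a downward argument using the finiteness of the filtration. Take $x\in F^qX$ and write $x=\sum_\lambda c_\lambda z_\lambda$, which is possible because the family spans $X$. This expression alone does not finish the job: terms with $e_\lambda<q$ could cancel with one another.

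We handle this cancellation with a standard minimal-level argument rather than working from a single expression. Define $Y_q=\operatorname{span}\{z_\lambda : e_\lambda\geq q\}\subseteq F^qX$. We show $Y_q=F^qX$ by descending induction on $q$; it is trivial for $q$ large, since then $F^qX=0$. The difficulty is that this equality fails in general: the span of elements lying deep in the filtration need not exhaust $F^qX$, because a sum of shallow elements can itself be deep. So the claim as stated needs repair, and this is the main obstacle.

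The fix is to make the family triangular with respect to the filtration. We do this by Gaussian elimination, which does not increase the count at any level. Process the family in order of increasing $e_\lambda$. At level $q$, choose among the current elements with $e=q$ a subset whose images in $F^qX/F^{q+1}X$ form a basis of their span. Each remaining element at that level is modified by subtracting a linear combination of the chosen ones, which pushes it into $F^{q+1}X$. Its new $e$-value is then at least $q+1$, or it becomes $0$ and is discarded.

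These operations preserve the span of the family. Each element's $e$-value only increases, and the number of elements never increases, so $\sum\rho^{e}$ only decreases. After finitely many steps, since the filtration is finite, we obtain a family in which, at each level $q$, the elements with $e=q$ have linearly independent images in $F^qX/F^{q+1}X$. Such a family is linearly independent. It spans $X$, so it is a basis.

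For a filtration-adapted basis, the number of elements at each level is exactly $\dim(F^qX/F^{q+1}X)$. To see this, note that the elements with $e\geq q$ span $F^qX$: any $x\in F^qX$, written in the basis, cannot involve an element of lower level, since the leading level of a combination with independent leading parts equals the minimum level occurring. Consequently $h_\rho(X)$ equals the final $\sum\rho^{e}$, which is at most the original $\sum\rho^{e_\lambda}$, which in turn is at most $\sum\rho^{d_\lambda}$.
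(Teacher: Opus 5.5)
Your proof is correct, but it takes a different route from the paper. You work level by level. Because the images of the $z_\lambda$ with $e_\lambda=q$ need not span $F^qX/F^{q+1}X$, you run a Gaussian elimination that only raises levels and never adds vectors. It ends with a filtration-adapted basis whose level counts are exactly $\dim_K(F^qX/F^{q+1}X)$.

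The paper avoids the cancellation problem entirely by using cumulative quotients. The $z_\lambda$ with $d_\lambda\le q$ trivially span $X/F^{q+1}X$, since all the others lie in $F^{q+1}X$, so $\dim_K(X/F^{q+1}X)\le\#\{\lambda\mid d_\lambda\le q\}$. Summation by parts, $h_\rho(X)=(1-\rho)\sum_{q\ge0}\dim_K(X/F^{q+1}X)\rho^q$, then turns these cumulative bounds into the weighted one, because $(1-\rho)\rho^q\ge0$.

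So the level-wise count fails, but the cumulative count is automatic, and for decreasing weights it is all that is needed. Your elimination in effect constructs an injection from an adapted basis into the family that never lowers levels, which is a constructive form of the same cumulative count. The paper's argument is two lines and involves no choices. Yours is longer but produces an explicit adapted basis built from the $z_\lambda$.

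In a clean write-up, delete the opening reduction and the first ``key claim''. The reduction says it suffices to have $\#\{\lambda\mid e_\lambda=q\}\ge\dim_K(F^qX/F^{q+1}X)$ for the original family, and both statements are false. For example, take $X=K^2$ with basis $x_1,x_2$, $F^1X=Kx_2$ and $F^2X=0$; the spanning family $x_1,\ x_1+x_2$ has no member at level $1$. Your final paragraph correctly applies the level count only to the eliminated family, so the proof itself does not depend on these statements.
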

	
	\begin{proof}
		For every $q\geq0$, the images in $X/F^{q+1}X$ of the vectors with
		$d_\lambda\leq q$ span that quotient.  Hence
		\[
		\dim_K(X/F^{q+1}X)
		\leq
		\#\{\lambda\in\Lambda\mid d_\lambda\leq q\}.
		\]
		If $a_q=\dim_K(F^qX/F^{q+1}X)$, then
		$\dim_K(X/F^{q+1}X)=\sum_{k=0}^q a_k$, and telescoping gives
		\[
		h_\rho(X)
		=(1-\rho)\sum_{q\geq0}\dim_K(X/F^{q+1}X)\rho^q.
		\]
		Therefore
		\[
			h_\rho(X)
			\leq
			(1-\rho)\sum_{q\geq0}
			\#\{\lambda\in\Lambda\mid d_\lambda\leq q\}\rho^q  =\sum_{\lambda\in\Lambda}\rho^{d_\lambda}.
		\qedhere\]
	\end{proof}

	\section{A matrix Golod--Shafarevich inequality for completed path algebras}
	\label{sec:matrix-gs}
	
	We now prove Theorem~\ref{theorem:completed-matrix-gs}.  We first make the notation in its statement precise and then isolate the two structural
	facts needed in the proof.
	
	Let $M=(m_{ij})_{n\times n}$ be the adjacency matrix of $Q$, where
	$m_{ij}$ is the number of arrows from $i$ to $j$.  Vectors are columns
	indexed by $Q_0$.  We write ${\bf I}_{n}$ for the identity $n\times n$ matrix and
	$\one$ for the column vector whose entries are all one.  All vector and
	matrix inequalities are entrywise.
	
	Let $r_1,\ldots,r_m\in\m^2$ be nonzero relations.  We may and do assume that
	each relation is homogeneous with respect to the vertex idempotents.  Indeed,
	an arbitrary relation decomposes as $r_\lambda=\sum_{i,j\in Q_0}e_ir_\lambda e_j$, and replacing the original family by its nonzero corner components does not
	change the closed two-sided ideal it generates.
	
	Thus, for each $1\leq\lambda\leq m$, there are unique vertices $i_\lambda,j_\lambda\in Q_0$ such that $r_\lambda\in e_{i_\lambda}\widehatKQ e_{j_\lambda}$. Equivalently, $r_\lambda$ is a formal linear combination of paths from
	$j_\lambda$ to $i_\lambda$.  Put $d_\lambda=\ord_{\m}(r_\lambda)$, and define
	\[
	\mathcal I_{\mathrm{alg}}
	=\sum_{\lambda=1}^m\widehatKQ r_\lambda\widehatKQ,
	\qquad
	\mathcal I=\overline{\mathcal I_{\mathrm{alg}}},
	\qquad
	A=\widehatKQ/\mathcal I.
	\]
	For $0<\rho<1$, let $R(\rho)$ be the $n\times n$ matrix
	\[
	R(\rho)_{ij}
	=\sum_{\substack{1\leq\lambda\leq m\\
			i_\lambda=i,\ j_\lambda=j}}
	\rho^{d_\lambda}.
	\]
	Thus the entries of $R(\rho)$ record both the orders of the chosen relations
	and their endpoint idempotents.  With this notation, the assertion to be
	proved is exactly Theorem~\ref{theorem:completed-matrix-gs}.
	
	The proof proceeds in three steps.  We first identify the radical filtration
	of $A$, then describe the kernel of the arrow-multiplication map, and finally
	apply Lemma~\ref{lemma:filtered-spanning} to that kernel.
	
	\subsection{The radical of a finite-dimensional quotient}
	
	The following observation is standard.  In the finite-dimensional setting, it is a well-known fact that the image of the arrow ideal coincides with the
	Jacobson radical and is therefore nilpotent.  We include a short proof for
	completeness and to make explicit the role of the closedness of the relation
	ideal in the completed setting.
	
	Let $\pi:\widehatKQ\to A$ be the quotient map and set $\rA=(\m+\mathcal I)/\mathcal I$.
	
	\begin{lemma}
		\label{lemma:radical}
		If $A$ is finite-dimensional, then $\rA=\rad A$ and $\rA$ is nilpotent.
	\end{lemma}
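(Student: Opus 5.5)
The plan is to show first that $\rA$ is nilpotent, and then to identify it with $\rad A$ by the standard characterization of the radical of a finite-dimensional algebra.

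\textbf{Step 1: $A/\rA$ is semisimple.} The quotient $A/\rA\cong\widehatKQ/(\m+\mathcal I)$ is a quotient of $\widehatKQ/\m\cong S\cong K^n$, which is semisimple. Hence $\rad A\subseteq\rA$.

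\textbf{Step 2: $\rA$ is nilpotent.} Consider the descending chain of two-sided ideals $\rA\supseteq\rA^2\supseteq\cdots$ in the finite-dimensional algebra $A$. It stabilizes, say $\rA^N=\rA^{N+1}$. Taking preimages gives
\[
\m^N+\mathcal I=\m^{N+1}+\mathcal I,
\]
since $\pi(\m^q)=\rA^q$. Iterating yields $\m^N\subseteq\m^q+\mathcal I$ for every $q\geq N$. Intersecting over $q$ gives
\[
\m^N\subseteq\bigcap_{q}(\m^q+\mathcal I)=\overline{\mathcal I}=\mathcal I.
\]
Here we use that in the $\m$-adic topology the closure of any subspace $U$ equals $\bigcap_q(U+\m^q)$, because the $\m^q$ form a basis of neighbourhoods of $0$, together with the fact that $\mathcal I$ is closed. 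Consequently $\rA^N=\pi(\m^N)=0$.

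\textbf{Step 3: conclusion.} A nilpotent two-sided ideal is contained in the Jacobson radical, so $\rA\subseteq\rad A$. Combined with Step 1, this gives $\rA=\rad A$.

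\textbf{Main obstacle.} The only delicate point is Step 2, where passing from "$\m^N\subseteq\m^q+\mathcal I$ for all $q$" to "$\m^N\subseteq\mathcal I$" genuinely requires $\mathcal I$ to be closed. This is precisely why the closure was built into the definition of $\mathcal I$. For the identity $\pi(\m^q)=\rA^q$, it suffices to note that $\m^q$ is the $q$-th power of $\m$ as an ideal of $\widehatKQ$. This holds since $Q$ is finite: every element of $\m^q$ is a finite sum $\sum_p p\,x_p$ over paths $p$ of length exactly $q$, with $x_p\in\widehatKQ$.
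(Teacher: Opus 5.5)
Your proof is correct and follows essentially the same route as the paper. Both arguments combine stabilization of the chain $\rA\supseteq\rA^2\supseteq\cdots$ in the finite-dimensional algebra $A$ with closedness of $\mathcal I$, in the form $\bigcap_q(\mathcal I+\m^q)=\mathcal I$, to force $\rA^N=0$, and then use semisimplicity of $A/\rA$ for the reverse inclusion; the only cosmetic differences are that you need $A/\rA$ to be merely a quotient of $S$ (the paper first shows $\mathcal I\subseteq\m^2$ to get $A/\rA\cong S$) and that you justify $\pi(\m^q)=\rA^q$ explicitly.
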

	
	\begin{proof}
		Because every $r_\lambda$ belongs to $\m^2$ and $\m^2$ is closed, one has
		$\mathcal I\subseteq\m^2$.  Hence $A/\rA\cong S$.  For every $q\geq1$, $\rA^q=(\m^q+\mathcal I)/\mathcal I$. Since the ideal $\mathcal I$ is closed, 
		\[
		\bigcap_{q\geq1}\rA^q
		=\left(\bigcap_{q\geq1}(\mathcal I+\m^q)\right)/\mathcal I
		=0.
		\]
		On the other hand, since $A$ is finite-dimensional, the descending chain of powers of $\rA$, as subspaces of $A$, stabilizes.  Its stable value is exactly the above displayed intersection. Therefore there exists $N\geq 1$ such that $\rA^N=0$, that is,
		$\rA$ is nilpotent and $\rA\subseteq\rad A$. The reverse inclusion $\rad A\subseteq\rA$ follows by the semisimplicity of
		$A/\rA\cong S$.
	\end{proof}
	
	\subsection{Relations in the multiplication kernel}
	
	We next identify the relation module that enters the dimension count.  The
	algebraic content is standard: the kernel of the arrow-multiplication map is
	generated by the deconcatenations of the defining relations.  In the
	completed setting one must additionally check that taking the closure of the
	relation ideal introduces no new generators.
	
	Assume in this subsection that $A$ is finite-dimensional.  Set
	\[
	E=V\otimes_SA,
	\qquad
	\mu:E\longrightarrow\rA,
	\qquad
	v\otimes x\longmapsto vx,
	\qquad
	L=\ker\mu.
	\]
	We will compute $L$ as follows.
	For a non-trivial path $p=a_q\cdots a_1$, define left deconcatenation by $\delta(p)=a_q\otimes a_{q-1}\cdots a_1$. It extends to an isomorphism of topological right $\widehatKQ$-modules $\delta:\m\longrightarrow V\widehat\otimes_S\widehatKQ$, where the completed tensor product is the product of the path-length components.  Compose with $\operatorname{id}_V\otimes\pi$ to obtain $\delta_A:\m\longrightarrow E$. Denote by $G_\lambda=\delta_A(r_\lambda)$.
	
	\begin{lemma}
		\label{lemma:kernel-generators}
		For every $\lambda$, one has
		$G_\lambda\in e_{i_\lambda}Le_{j_\lambda}$, and
		\begin{equation*}
			L=\sum_{\lambda=1}^mG_\lambda A.
		\end{equation*}
	\end{lemma}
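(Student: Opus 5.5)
The plan is to realize $L$ as the image under $\delta_A$ of the closed ideal $\mathcal I$, and then to use finite-dimensionality of $A$ to remove all closures.

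\textbf{Step 1: the $G_\lambda$ lie in the right corner of $L$.} Since $r_\lambda\in e_{i_\lambda}\widehatKQ e_{j_\lambda}$ and $\delta$ is a morphism of right $\widehatKQ$-modules that preserves left idempotents, we get $G_\lambda\in e_{i_\lambda}Ee_{j_\lambda}$. The map $\mu\circ\delta_A$ sends $x\in\m$ to $\pi(x)$: this holds on paths, and both maps are continuous because $A$, being finite-dimensional with $\mathcal I$ closed, carries the discrete quotient topology (by Lemma~\ref{lemma:radical}, $\pi(\m^N)=0$). Hence $\mu(G_\lambda)=\pi(r_\lambda)=0$.

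\textbf{Step 2: identify $L$ with $\delta_A(\mathcal I)$.} Since $\delta$ is an isomorphism, $\mu$ is surjective with $\mu\circ\delta_A=\pi|_\m$. An element $\delta_A(x)$ therefore lies in $L$ exactly when $x\in\mathcal I+\ker(\mathrm{id}_V\otimes\pi)\circ\delta$.
\begin{itemize}
\item The kernel of $\delta_A$ is $\delta^{-1}(V\widehat\otimes_S\mathcal I)=\sum_a a\mathcal I\subseteq\mathcal I$.
\item $\delta_A$ is surjective.
\end{itemize}
It follows that $L=\delta_A(\mathcal I\cap\m)=\delta_A(\mathcal I)$.

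\textbf{Step 3: compute $\delta_A$ on the ideal.} We need $\delta_A(\mathcal I)=\sum_\lambda G_\lambda A$. Write a general element of $\mathcal I_{\mathrm{alg}}$ as $\sum u\,r_\lambda\,w$. For $w$ arbitrary, right-linearity gives $\delta_A(r_\lambda w)=G_\lambda\pi(w)$. For $u$ a path of positive length $u=a_q\cdots a_1$, we have $\delta(u r_\lambda w)=a_q\otimes a_{q-1}\cdots a_1 r_\lambda w$, and its image under $\mathrm{id}\otimes\pi$ is zero because $\pi(\cdot\,r_\lambda\,\cdot)=0$. Only the idempotent parts of $u$ contribute, so $\delta_A(\mathcal I_{\mathrm{alg}})=\sum_\lambda G_\lambda A$.

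\textbf{Step 4: pass to the closure.} This is the main obstacle. Here $\mathcal I$ is only the closure of $\mathcal I_{\mathrm{alg}}$, and the products $u\,r_\lambda\,w$ involve infinite sums of $u$'s. I would argue using nilpotency. By Lemma~\ref{lemma:radical}, $\m^N\subseteq\mathcal I$ for some $N$, and so
\[
\mathcal I=\mathcal I_{\mathrm{alg}}+\m^{N'}
\]
for every $N'\geq N$: given $x\in\mathcal I$, choose $y\in\mathcal I_{\mathrm{alg}}$ with $x-y\in\m^{N'}$. It remains to see that $\delta_A(\m^{N+1})=V\otimes\pi(\m^N)=0$. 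Hence $\delta_A(\mathcal I)=\delta_A(\mathcal I_{\mathrm{alg}})$, which completes the argument.

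The subtle point is justifying the equality $\mathcal I=\mathcal I_{\mathrm{alg}}+\m^{N'}$, which is just the definition of closure in the $\m$-adic topology. One also needs the right-module continuity of $\delta$ so that Step 3 applies to infinite combinations $w$. Since $\widehatKQ r_\lambda\widehatKQ$ consists of finite sums of $u r_\lambda w$ with $u,w$ arbitrary, we can split $u=\sum_i\epsilon_i e_i+u'$ with $u'\in\m$, and continuity handles $u'$ termwise.
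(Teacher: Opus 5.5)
Your proposal is correct and follows the paper's proof: you obtain $L=\delta_A(\mathcal I)$ from surjectivity of $\delta_A$ and $\mu\delta_A=\pi|_{\m}$, compute $\delta_A(\mathcal I_{\mathrm{alg}})=\sum_\lambda G_\lambda A$ by splitting $u$ into its idempotent part and its part in $\m$, and remove the closure using $\delta_A(\m^{N+1})=0$. The only difference is cosmetic: you write the last step as $\mathcal I=\mathcal I_{\mathrm{alg}}+\m^{N+1}$, which spells out the paper's argument that $\delta_A$ is continuous into the discrete space $E$, where every subspace is closed.
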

	
	\begin{proof}
		Every positive-length path has a unique leftmost arrow.  Hence $\delta_A$ is
		surjective, and $\mu\delta_A(y)=\pi(y)$ for any $y\in\m$.
		It follows that $L=\delta_A(\mathcal I)$. Indeed, one inclusion is immediate.  Conversely, if $z\in L$, choose
		$y\in\m$ with $\delta_A(y)=z$.  Then $\pi(y)=\mu(z)=0$, so
		$y\in\mathcal I$.
		
		Since deconcatenation preserves the two outer idempotents, $G_\lambda\in e_{i_\lambda}Ee_{j_\lambda}$. Write $u=u_0+u_+$ with $u_0\in S$ and $u_+\in\m$.  If
		$u_0=\sum_{k\in Q_0}c_ke_k$ and $v\in \widehatKQ$, then left $S$-linearity and right
		$\widehatKQ$-linearity give
		\[
		\delta_A(u_0r_\lambda v)
		=u_0G_\lambda\pi(v)
		=c_{i_\lambda}G_\lambda\pi(v),
		\]
		whereas $\delta_A(u_+r_\lambda v) =\delta_A(u_+)\pi(r_\lambda v)=0$. Conversely, $G_\lambda\pi(v)=\delta_A(r_\lambda v)$.  Therefore
		\[
		\delta_A(\mathcal I_{\mathrm{alg}})
		=\sum_{\lambda=1}^mG_\lambda A.
		\]
		
		By Lemma~\ref{lemma:radical}, choose $N$ with $\rA^N=0$.  Then
		\[
		\delta_A(\m^{N+1})
		\subseteq V\otimes_S\pi(\m^N)
		=V\otimes_S\rA^N
		=0.
		\]
		Thus $\delta_A$ is continuous when the finite-dimensional space $E$ is given
		the discrete topology.  Since every linear subspace of $E$ is closed,
		\[
		\delta_A(\mathcal I)
		=\delta_A\bigl(\overline{\mathcal I_{\mathrm{alg}}}\bigr)
		\subseteq
		\overline{\delta_A(\mathcal I_{\mathrm{alg}})}
		=\delta_A(\mathcal I_{\mathrm{alg}}).
		\]
		The reverse inclusion is immediate, and hence $L=\delta_A(\mathcal I) =\sum_{\lambda=1}^mG_\lambda A$. Finally, $r_\lambda\in\mathcal I$ implies $G_\lambda\in L$.  Together with
		the endpoint condition above, this gives $G_\lambda\in e_{i_\lambda}Le_{j_\lambda}$.
	\end{proof}
	
	\subsection{Proof of the main inequality}
	
	We now combine the preceding description of the multiplication kernel with
	the filtered spanning estimate from Section~\ref{sec:preliminaries}.
	
	\begin{proof}[Proof of Theorem~\ref{theorem:completed-matrix-gs}]
		We use the shifted filtrations
		\[
		F^0E=F^1E=E,
		\qquad
		F^qE=V\otimes_S\rA^{q-1}\quad(q\geq2),
		\]
		and
		\[
		F^0\rA=F^1\rA=\rA,
		\qquad
		F^q\rA=\rA^q\quad(q\geq2).
		\]
		Give $L$ the induced filtration $F^qL=L\cap F^qE$.  Then the multiplication map
		$\mu:E\to\rA$ is strict.  In fact for $q\geq2$,
		\[
		\mu(F^qE)=\pi(V)\rA^{q-1}
		=\pi(V\m^{q-1})=\pi(\m^q)=\rA^q,
		\]
		and strictness for $q=0,1$ is the surjectivity of $\mu$.  Thus the
		associated graded sequence of
		$0\to L\to E\overset{\mu}{\to}\rA\to0$ is exact.
		Furthermore, for each vertex $i$, the induced graded sequence 
$0\to e_iL\to e_iE\overset{\mu}{\to}e_i\rA\to0$ is also exact.

We define
\[
H_i(\rho)=\sum_{q\geq0}
\dim_K(e_i\rA^q/e_i\rA^{q+1})\rho^q,
\qquad \rA^0=A.
\]
The sum is finite by Lemma~\ref{lemma:radical}, and $H_i(\rho)\geq1$
because $A/\rA\cong S$. For $e_i\rA$, the shifted filtration
$F^0\rA=F^1\rA=\rA$ and $F^q\rA=\rA^q$ for $q\geq2$ gives a vanishing
$q=0$ layer, while every subsequent layer matches the corresponding
term of~$H_i(\rho)$.  Since the omitted $q=0$ term of~$H_i(\rho)$ is
$\dim_K(e_iA/e_i\rA)=1$, we obtain
\[h_\rho(e_i\rA)=H_i(\rho)-1,
\]
where $h_\rho$ is defined as in
Subsection~\ref{subsec:filtered-spanning-estimate}.

Since $a\in e_{t(a)}Ve_{s(a)}$, the space $e_iVe_j$ is spanned by the
arrows from $j$ to~$i$, so $\dim_K(e_iVe_j)=m_{ji}$.
Using $1=\sum_{j\in Q_0}e_j$, the $i$-th vertex component of
$E=V\otimes_SA$ decomposes as
\[
 e_iE=\bigoplus_{j\in Q_0}(e_iVe_j)\otimes_K(e_jA).
\]
We now read off $h_\rho(e_iE)$ layer by layer, using the shifted
filtration $F^0E=F^1E=E$ and $F^qE=V\otimes_S\rA^{q-1}$ for $q\geq2$.

The layer $q=0$.
Since $F^0E=F^1E$, the quotient $F^0(e_iE)/F^1(e_iE)$ vanishes.

The layer $q=1$.
We have
\[
 F^1(e_iE)/F^2(e_iE)
 \cong\bigoplus_{j\in Q_0}(e_iVe_j)\otimes_K(e_jA/e_j\rA).
\]
Since 
$e_jA/e_j\rA\cong Ke_j$ is one-dimensional, we have
\[
 \dim_K\bigl(F^1(e_iE)/F^2(e_iE)\bigr)=\sum_{j\in Q_0}m_{ji},
\]
where $\dim_K(e_iVe_j)=m_{ji}$ by our convention.

The layers $q\geq2$.
Similarly,
\[
 F^q(e_iE)/F^{q+1}(e_iE)
 \cong\bigoplus_{j\in Q_0}(e_iVe_j)\otimes_K
       (e_j\rA^{q-1}/e_j\rA^q),
\]
so
\[
 \dim_K\bigl(F^q(e_iE)/F^{q+1}(e_iE)\bigr)
 =\sum_{j\in Q_0}m_{ji}\,
   \dim_K(e_j\rA^{q-1}/e_j\rA^q).
\]

Assembling these dimensions into $h_\rho$ and factoring out
$\rho\sum_jm_{ji}$ yields
\[
 h_\rho(e_iE)
 =\rho\sum_{j\in Q_0}m_{ji}
  \Bigl[\dim_K(e_jA/e_j\rA)
   +\sum_{q\geq1}\rho^{q}\,
    \dim_K(e_j\rA^{q}/e_j\rA^{q+1})
  \Bigr]
 =\rho\sum_{j\in Q_0}m_{ji}\,H_j(\rho).
\]

Then the exactness gives
		\begin{equation}
			\label{eq:exact-weight}
			h_\rho(e_iL)
			=\rho\sum_{j\in Q_0}m_{ji}H_j(\rho)-H_i(\rho)+1.
		\end{equation}
		
		For each $j$, choose a basis $\mathcal B_j$ of $e_jA$ by lifting bases of
		the successive quotients $e_j\rA^q/e_j\rA^{q+1}$.  If
		$b\in\mathcal B_j$ is chosen at level $q$, write $\nu(b)=q$.  Then
		\[
		H_j(\rho)=\sum_{b\in\mathcal B_j}\rho^{\nu(b)}.
		\]
		
		The definition of $d_\lambda$ gives
		$G_\lambda\in F^{d_\lambda}E$.  By
		Lemma~\ref{lemma:kernel-generators}, the vectors $G_\lambda b$ with
		$i_\lambda=i$ and $b\in\mathcal B_{j_\lambda}$ span $e_iL$.
		Such a vector belongs to $F^{d_\lambda+\nu(b)}L$.
		Lemma~\ref{lemma:filtered-spanning} implies
		\begin{equation}
			\label{eq:relation-weight}
			h_\rho(e_iL)
			\leq\sum_{j\in Q_0}R(\rho)_{ij}H_j(\rho).
		\end{equation}
		
		Combining \eqref{eq:exact-weight} and \eqref{eq:relation-weight} gives
		\[
		1\leq H_i(\rho)-\rho\sum_{j\in Q_0}m_{ji}H_j(\rho)
		+\sum_{j\in Q_0}R(\rho)_{ij}H_j(\rho)
		\qquad(i\in Q_0).
		\]
		This is precisely \eqref{eq:completed-matrix-gs}.
	\end{proof}
	
	Before passing to potentials, we compare the theorem with the scalar and graded forms of the Golod--Shafarevich inequality.
	
	\begin{remark}[Relation with classical Golod--Shafarevich--Vinberg inequalities]
		Suppose first that $Q$ has one vertex, let $g=|Q_1|$, and set $f(t)=1-gt+\sum_{\lambda=1}^m t^{d_\lambda}$. Theorem~\ref{theorem:completed-matrix-gs} implies that
		finite-dimensionality of $A$ forces $f(\rho)>0$ for every $0<\rho<1$.
		Thus the existence of $\rho\in(0,1)$ with $f(\rho)\leq0$ gives the
		usual numerical Golod--Shafarevich--Vinberg obstruction \cite{GolodShafarevich,Golod,Vinberg}.
		
		This is not the full statement of Vinberg's formal-power-series theorem.
		Vinberg works with the filtration dimensions of a topological relation space
		and uses a coefficientwise condition on $(1-t)/f(t)$.  In particular, his
		formulation can retain linear dependencies among the initial homogeneous
		parts of the relations, whereas the matrix $R(\rho)$ above is defined from a
		chosen finite generating family \cite{Vinberg}.
	\end{remark}
	
	The inequality immediately yields an infinitude criterion.
	
	\begin{corollary}
		\label{corollary:relation-obstruction}
		Fix $0<\rho<1$.  If there is a vector
		$u\in\mathbb R_{>0}^{n}$ such that
		\[
		u^{\top}
		\bigl({\bf I}_{n}-\rho M^{\top}+R(\rho)\bigr)\leq0,
		\]
		then $A$ is infinite-dimensional.
	\end{corollary}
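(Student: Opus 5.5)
We argue by contraposition, using Theorem~\ref{theorem:completed-matrix-gs}. Suppose $A$ is finite-dimensional. Then for the fixed $\rho$ there is a vector $H=H(\rho)\in\mathbb R_{\geq1}^{n}$ with
\[
\one\leq\bigl({\bf I}_{n}-\rho M^{\top}+R(\rho)\bigr)H
\]
entrywise. The idea is to pair this vector inequality with the given positive vector $u$ and obtain a contradiction of sign.

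Since $u\in\mathbb R_{>0}^n$ has nonnegative entries, left multiplication by $u^{\top}$ preserves entrywise inequalities between column vectors. Hence
\[
u^{\top}\one\leq u^{\top}\bigl({\bf I}_{n}-\rho M^{\top}+R(\rho)\bigr)H.
\]
The left-hand side equals $\sum_i u_i$, which is strictly positive because every $u_i>0$.

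For the right-hand side, write $w^{\top}=u^{\top}({\bf I}_{n}-\rho M^{\top}+R(\rho))$. By hypothesis $w\leq0$ entrywise. Since $H$ has entries at least $1$, in particular nonnegative, we get $w^{\top}H=\sum_j w_jH_j\leq0$. Combining the two displays gives $0<\sum_iu_i\leq0$, a contradiction. Therefore $A$ must be infinite-dimensional.

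There is no real obstacle here. The only points to check are the sign conventions: $u$ is multiplied on the left and $H$ on the right, and the nonnegativity of $u$ and of $H$ justifies the two monotonicity steps.
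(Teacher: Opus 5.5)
Your proof is correct and is essentially the paper's argument: assume $A$ is finite-dimensional, take $H(\rho)\geq\one$ from Theorem~\ref{theorem:completed-matrix-gs}, and multiply on the left by $u^{\top}$ to obtain $0<u^{\top}\one\leq u^{\top}\bigl({\bf I}_{n}-\rho M^{\top}+R(\rho)\bigr)H(\rho)\leq0$. You also spell out the two monotonicity steps, using nonnegativity of $u$ and of $H(\rho)$, which the paper leaves implicit.
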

	
	\begin{proof}
		If $A$ were finite-dimensional, multiply
		\eqref{eq:completed-matrix-gs} on the left by $u^{\top}$.  Since
		$H(\rho)>0$, one would obtain
		\[
		0<u^{\top}\one
		\leq u^{\top}
		\bigl({\bf I}_{n}-\rho M^{\top}+R(\rho)\bigr)H(\rho)
		\leq0,
		\]
		a contradiction.
	\end{proof}
	
	This corollary is the form of the matrix inequality used in the rest of the
	paper.  We now specialize it to Jacobian ideals, whose relations are indexed
	by the arrows of the quiver.
	
	\section{Jacobian algebras and directed distances}
	\label{sec:jacobian}
	
	In this section we apply the completed matrix inequality to cyclic
	derivatives.  We first retain their actual $\m$-adic orders and then bound
	those orders by directed distances in the underlying quiver.
	
	\subsection{Orders of cyclic derivatives}
	
	Let $Q$ be $2$-acyclic and let $W$ be a potential.  For each arrow $a$, put $d_a(W)=\ord_{\m}(\partial_aW)$, and define the matrix $D_W(\rho)$ by
	\[
	(D_W(\rho))_{ij}
	=\sum_{\substack{a\in Q_1\\s(a)=i,\ t(a)=j}}
	\rho^{d_a(W)}.
	\]
	
	Theorem~
	\ref{theorem:completed-matrix-gs} immediately gives the following
	potential-dependent estimate.
	
	\begin{theorem}
		\label{theorem:potential-order}
		If $\Jac(Q,W)$ is finite-dimensional, then for every $0<\rho<1$ there is a
		vector $H(\rho)\in\mathbb R_{\geq1}^{n}$ such that
		\begin{equation}
			\label{eq:potential-order}
			\one\leq
			\bigl({\bf I}_{n}-\rho M^{\top}+D_W(\rho)\bigr)H(\rho).
		\end{equation}
	\end{theorem}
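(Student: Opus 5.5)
The plan is to apply Theorem~\ref{theorem:completed-matrix-gs} to the relation family $\{\partial_aW\}_{a\in Q_1}$ and to check that the relation matrix $R(\rho)$ it produces is exactly $D_W(\rho)$.

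\emph{The hypotheses hold.} Since $Q$ is $2$-acyclic, every oriented cycle has length at least $3$. Hence $\partial_aW\in\m^2$ for every arrow $a$, as recorded at the end of the preliminaries. Moreover, $\mathcal I(W)$ is by definition the closure of the two-sided ideal generated by these finitely many elements. So $\Jac(Q,W)$ is a quotient $A=\widehatKQ/\mathcal I$ of exactly the type treated in Section~\ref{sec:matrix-gs}.

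\emph{Endpoint homogeneity.} For $a\colon i\to j$, the relation $\partial_aW$ lies in $e_i\widehatKQ e_j$. So it is already endpoint-homogeneous, with $i_\lambda=s(a)=i$ and $j_\lambda=t(a)=j$, and no corner decomposition is needed.

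\emph{Matching $R(\rho)$ with $D_W(\rho)$.} The $\m$-adic order of this relation is $d_a(W)$. Hence
\[
R(\rho)_{ij}=\sum_{\substack{a\in Q_1\\ s(a)=i,\ t(a)=j}}\rho^{d_a(W)}=(D_W(\rho))_{ij}.
\]
The index convention must be checked carefully, because it is where a transpose error could slip in. In $R(\rho)$ the row index is $i_\lambda$, the left idempotent, and for $\partial_aW$ this is $s(a)$, which matches the definition of $D_W$.

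\emph{Vanishing derivatives.} The section's standing assumption is that the relations are nonzero, but some $\partial_aW$ may vanish. Each such $a$ contributes $\rho^\infty=0$ to $D_W(\rho)$. Dropping these relations changes neither $\mathcal I(W)$ nor the matrix. Alternatively, the proof of Theorem~\ref{theorem:completed-matrix-gs} runs unchanged with $G_\lambda=0$, since zero vectors add nothing to a spanning family.

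\emph{Conclusion.} With these identifications, the vector $H(\rho)\in\mathbb R^n_{\ge1}$ given by Theorem~\ref{theorem:completed-matrix-gs} satisfies \eqref{eq:potential-order}. The argument is a direct specialization; the only real obstacle is the bookkeeping of the endpoint convention.
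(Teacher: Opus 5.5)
Your proposal is correct and matches the paper's proof: it discards the arrows with $\partial_aW=0$, applies Theorem~\ref{theorem:completed-matrix-gs} to the remaining relations $r_a=\partial_aW\in e_{s(a)}\widehatKQ e_{t(a)}$, and notes that with $\rho^\infty=0$ the matrix $R(\rho)$ is exactly $D_W(\rho)$. Your extra checks, namely $\partial_aW\in\m^2$, that the two closed ideals agree, and the row/column convention, are the right bookkeeping and are all sound.
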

	
	\begin{proof}
		Discard the arrows for which $\partial_aW=0$ and apply
		Theorem~\ref{theorem:completed-matrix-gs} to the remaining cyclic
		relations $r_a=\partial_aW$.  If $a:i\to j$, then
		$r_a\in e_i\widehatKQ e_j$.  With the convention $\rho^\infty=0$, the matrix $R(\rho)$ in
		Theorem~\ref{theorem:completed-matrix-gs} is exactly $D_W(\rho)$.
	\end{proof}
	
	\begin{remark}[The homogeneous-potential specialization]
		Suppose that $W$ is homogeneous of degree $d$.  Every nonzero cyclic
		derivative $\partial_aW$ is then homogeneous of degree $d-1$, and hence $D_W(\rho)\leq \rho^{d-1}M$, with equality when all cyclic derivatives are nonzero.  In the latter case,
		the matrix in \eqref{eq:potential-order} becomes $I-\rho M^{\top}+\rho^{d-1}M$. This is the transpose, arising from our convention for path composition, of
		the untwisted matrix $I-Mt+M^{\top}t^{d-1}$ in \cite[Corollary~3.6]{GaddisRogalski}.  In particular, the matrix in
		Corollary~\ref{corollary:coarse-obstruction} is the same matrix for a
		homogeneous cubic potential.
		
		Potential-specific Golod--Shafarevich estimates may contain an additional
		term of degree $d$, reflecting the syzygy among cyclic derivatives; compare
		\cite{IyuduSmoktunowicz,IyuduShkarin}.  Likewise, under the much stronger
		assumption that the derivation-quotient algebra is twisted Calabi--Yau of
		dimension three, the exact matrix Hilbert-series formula contains the
		additional term $-Pt^d$
		\cite[Theorem~3.1]{GaddisRogalski}.  We do not assume exactness of the
		potential complex or a Calabi--Yau property, so no such higher-syzygy term is
		available in the generality considered here.
	\end{remark}
	
	\begin{corollary}
		\label{corollary:potential-order-obstruction}
		If there are $0<\rho<1$ and $u\in\mathbb R_{>0}^{n}$ such that
		\[
		u^{\top}
		\bigl({\bf I}_{n}-\rho M^{\top}+D_W(\rho)\bigr)\leq0,
		\]
		then $W$ is not Jacobi-finite.
	\end{corollary}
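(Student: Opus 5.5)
This corollary follows by contraposition from Theorem~\ref{theorem:potential-order}, exactly as Corollary~\ref{corollary:relation-obstruction} follows from Theorem~\ref{theorem:completed-matrix-gs}. The plan is to assume that $\Jac(Q,W)$ is finite-dimensional and derive a contradiction from the given $\rho$ and $u$.

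First, I apply Theorem~\ref{theorem:potential-order} at the given $\rho\in(0,1)$. This produces $H(\rho)\in\mathbb R_{\geq1}^{n}$ with
\[
\one\leq\bigl({\bf I}_{n}-\rho M^{\top}+D_W(\rho)\bigr)H(\rho)
\]
entrywise. Next, I multiply this inequality on the left by $u^{\top}$. Since $u$ has strictly positive entries, the entrywise inequality is preserved, and I obtain
\[
0<u^{\top}\one\leq u^{\top}\bigl({\bf I}_{n}-\rho M^{\top}+D_W(\rho)\bigr)H(\rho).
\]
The hypothesis says that the row vector $u^{\top}({\bf I}_{n}-\rho M^{\top}+D_W(\rho))$ is entrywise nonpositive. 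Because $H(\rho)$ has entries at least $1$, and in particular nonnegative entries, the right-hand side is at most $0$. This gives $0<0$, a contradiction, so $W$ is not Jacobi-finite.

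There is no real obstacle here. The only point to check is the degenerate case where some $\partial_aW=0$. That arrow then contributes $\rho^\infty=0$ to $D_W(\rho)$, and this is consistent with the proof of Theorem~\ref{theorem:potential-order}, where such arrows are discarded before Theorem~\ref{theorem:completed-matrix-gs} is applied. So the matrix in the hypothesis is exactly the one to which that theorem applies.

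Alternatively, one can argue in a single line: by the proof of Theorem~\ref{theorem:potential-order}, the matrix $R(\rho)$ attached to the relations $\partial_aW$ equals $D_W(\rho)$. Then Corollary~\ref{corollary:relation-obstruction}, applied to $A=\Jac(Q,W)$, gives the claim directly.
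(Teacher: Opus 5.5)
Your proof is correct and is essentially the paper's argument: assume Jacobi-finiteness, apply Theorem~\ref{theorem:potential-order} to get $H(\rho)\geq\one$, and multiply on the left by $u^{\top}$ to reach $0<u^{\top}\one\leq0$. Your one-line alternative, which applies Corollary~\ref{corollary:relation-obstruction} with $R(\rho)=D_W(\rho)$, is equally valid.
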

	
	\begin{proof}
		Assume, to the contrary, that $W$ is Jacobi-finite.  Then
		Theorem~\ref{theorem:potential-order} gives a vector $H(\rho)\geq\one$ satisfying
		\eqref{eq:potential-order}.  Multiplication on the left by $u^{\top}$
		yields the required contradiction.
	\end{proof}
	
	\subsection{A quiver-theoretic obstruction}
	
	The preceding criterion still depends on the coefficients of $W$ through
	the orders $d_a(W)$.  We now replace them by quantities determined entirely
	by the directed graph.
	
	For vertices $i,j\in Q_0$, let $\operatorname{dist}_Q(i,j)$ be the length of a shortest directed path from $i$ to $j$, with value $\infty$ when no such path exists. For an arrow $a:i\to j$, set $\ell_Q(a)=\operatorname{dist}_Q(j,i)$. Define the matrix $C_Q(\rho)$ by
	\[
	(C_Q(\rho))_{ij}
	=\sum_{\substack{a\in Q_1\\s(a)=i,\ t(a)=j}}
	\rho^{\ell_Q(a)}.
	\]
	
	\begin{lemma}
		\label{lemma:return-order}
		For every potential $W$ and every arrow $a$, one has
		$d_a(W)\geq\ell_Q(a)$.
	\end{lemma}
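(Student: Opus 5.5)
The argument reduces to a statement about individual paths, and then passes to infinite sums using the coefficientwise nature of convergence. Write $W=\sum_c \alpha_c c$ as a (possibly infinite) linear combination of oriented cycles $c$. By definition, $\partial_a$ is continuous and linear, so $\partial_aW=\sum_c\alpha_c\,\partial_a c$, where the sum converges $\m$-adically. Each $\partial_a c$ is a finite sum of paths.

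It therefore suffices to show that every path occurring in $\partial_a c$ has length at least $\ell_Q(a)$. Once this holds, every partial sum lies in $\m^{\ell_Q(a)}$. Since $\m^{\ell_Q(a)}=\prod_{d\geq \ell_Q(a)}V^{\otimes_S d}$ is closed, the limit $\partial_aW$ lies there as well, giving $\ord_\m(\partial_aW)\geq\ell_Q(a)$.

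Now take a cycle $c=a_d\cdots a_1$ and an index $q$ with $a_q=a:i\to j$. The corresponding term of $\partial_a c$ is $a_{q-1}\cdots a_1a_d\cdots a_{q+1}$. We trace it in traversal order: it starts at $t(a_{q+1})$, which lies on the cycle right after $a$ and therefore equals $t(a)=j$. It then runs around the cycle, ending at $s(a_q)=i$. So this term is a directed path from $j$ to $i$ of length $d-1$, and by the definition of $\operatorname{dist}_Q$ its length is at least $\operatorname{dist}_Q(j,i)=\ell_Q(a)$.

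Two edge cases need a word. If $\ell_Q(a)=\infty$, no cycle passes through $a$, so $\partial_a c=0$ for every $c$, hence $\partial_aW=0$ and $d_a(W)=\infty$. If $d-1=0$, the cycle would be a loop, which is excluded; in any case $j\neq i$ then forces a positive length. No step is a real obstacle. The only point requiring care is the passage to infinite sums, which is handled by the closedness of $\m^q$ and the continuity of $\partial_a$ recorded in the preliminaries.
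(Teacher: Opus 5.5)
Your proof is correct and matches the paper's: each monomial of $\partial_a c$ is a path from $j$ to $i$, so its length is at least $\operatorname{dist}_Q(j,i)$, and passing to the formal sum gives the bound. You handle the infinite sum via closedness of $\m^{\ell_Q(a)}$ where the paper just notes that cancellation only raises the order, and the $\ell_Q(a)=\infty$ case is treated identically. One small slip: the monomial $a_{q-1}\cdots a_1a_d\cdots a_{q+1}$ starts at $s(a_{q+1})=t(a_q)=j$, not at $t(a_{q+1})$.
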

	
	\begin{proof}
		Let $a:i\to j$.  Every monomial occurring in $\partial_aW$ is obtained by
		removing an occurrence of $a$ from an oriented cycle and is therefore a
		path from $j$ to $i$.  Each such monomial has length at least
		$\ell_Q(a)$.  Cancellation can only increase the $\m$-adic order.  If there
		is no path from $j$ to $i$, then $a$ lies on no oriented cycle and
		$\partial_aW=0$. By our convention, both $\operatorname{dist}_Q(j,i)$ and $d_a(W)$ are $\infty$ in this case.
	\end{proof}
	
	Combining Lemma~
	\ref{lemma:return-order} with the potential-dependent
	inequality yields the main quiver-theoretic criterion.
	
	\begin{theorem}
		\label{theorem:return-obstruction}
		Suppose that there are $0<\rho<1$ and
		$u\in\mathbb R_{>0}^{n}$ such that
		\begin{equation}
			\label{eq:return-obstruction}
			u^{\top}
			\bigl({\bf I}_{n}-\rho M^{\top}+C_Q(\rho)\bigr)
			\leq0.
		\end{equation}
		Then $\Jac(Q,W)$ is infinite-dimensional for every field $K$ and every
		potential $W$ on $Q$.
	\end{theorem}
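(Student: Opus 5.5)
The plan is to reduce the statement to Corollary~\ref{corollary:potential-order-obstruction} by comparing $D_W(\rho)$ with $C_Q(\rho)$ entrywise. Fix a field $K$ and a potential $W$ on $Q$, and take the $\rho\in(0,1)$ and $u\in\mathbb R_{>0}^n$ provided by \eqref{eq:return-obstruction}. For each arrow $a$, Lemma~\ref{lemma:return-order} gives $d_a(W)\geq\ell_Q(a)$. Since $0<\rho<1$, the map $x\mapsto\rho^x$ is decreasing on $\mathbb Z_{\geq0}\cup\{\infty\}$ under the convention $\rho^\infty=0$, so $\rho^{d_a(W)}\leq\rho^{\ell_Q(a)}$. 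Summing over the arrows $a$ with $s(a)=i$ and $t(a)=j$ yields the entrywise inequality $D_W(\rho)\leq C_Q(\rho)$.

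Next I multiply on the left by $u^{\top}$. Because $u$ has positive entries and $C_Q(\rho)-D_W(\rho)$ has nonnegative entries, we get $u^{\top}D_W(\rho)\leq u^{\top}C_Q(\rho)$ entrywise. Therefore
\[
u^{\top}\bigl({\bf I}_n-\rho M^{\top}+D_W(\rho)\bigr)\leq u^{\top}\bigl({\bf I}_n-\rho M^{\top}+C_Q(\rho)\bigr)\leq0.
\]
Corollary~\ref{corollary:potential-order-obstruction} then shows that $W$ is not Jacobi-finite. Since $K$ and $W$ were arbitrary, and $M$ and $C_Q(\rho)$ depend only on $Q$, the conclusion holds for every field and every potential.

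The only delicate point is the bookkeeping for infinite values. When $\partial_aW=0$, or when $a$ lies on no oriented cycle, the corresponding terms are $\rho^\infty=0$ on both sides. This matches how Theorem~\ref{theorem:potential-order} discards vanishing derivatives, so the comparison remains valid. I also need the standing assumptions behind Theorem~\ref{theorem:potential-order}, namely that $Q$ is $2$-acyclic so that each $\partial_aW\in\m^2$. These are implicit in the setting of this section. If $Q$ were allowed to be more general, I would first check that $\partial_aW\in\m^2$, which follows from $\ell_Q(a)\geq2$ on $2$-acyclic quivers. I expect no real obstacle beyond this.
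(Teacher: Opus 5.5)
Your proposal is correct and is essentially the paper's proof: Lemma~\ref{lemma:return-order} together with $0<\rho<1$ gives $D_W(\rho)\leq C_Q(\rho)$ entrywise, positivity of $u$ transfers \eqref{eq:return-obstruction} to the corresponding inequality for $D_W(\rho)$, and Corollary~\ref{corollary:potential-order-obstruction} concludes. One harmless wording slip: if $\partial_aW=0$ while $a$ does lie on an oriented cycle, only the $D_W$ term is $0$ (not both), but $0\leq\rho^{\ell_Q(a)}$ keeps the comparison valid.
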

	
	\begin{proof}
		Lemma~\ref{lemma:return-order} and $0<\rho<1$ give the entrywise inequality
		$D_W(\rho)\leq C_Q(\rho)$.  Thus \eqref{eq:return-obstruction} implies
		\[
		u^{\top}
		\bigl({\bf I}_{n}-\rho M^{\top}+D_W(\rho)\bigr)\leq0.
		\]
		Corollary~\ref{corollary:potential-order-obstruction} applies.
	\end{proof}
	
	For applications where the individual return distances are inconvenient to
	compute, one may use the uniform lower bound available for every
	$2$-acyclic quiver.  Namely, $\ell_Q(a)\geq2$ for every arrow $a$, and hence
	$C_Q(\rho)\leq\rho^2M$, which gives a coarser but convenient criterion.
	
	\begin{corollary}
		\label{corollary:coarse-obstruction}
		Suppose that $Q$ is $2$-acyclic and that there are $0<\rho<1$ and
		$u\in\mathbb R_{>0}^{n}$ such that
		\begin{equation}
			\label{eq:coarse-obstruction}
			u^{\top}
			\bigl({\bf I}_{n}-\rho M^{\top}+\rho^2M\bigr)
			\leq0.
		\end{equation}
		Then $\Jac(Q,W)$ is infinite-dimensional for every field $K$ and every
		potential $W$ on $Q$.
	\end{corollary}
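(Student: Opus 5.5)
This will follow from Theorem~\ref{theorem:return-obstruction} by an entrywise comparison of the two matrices involved. The plan is to show
\[
C_Q(\rho)\leq\rho^2M
\]
entrywise for every $0<\rho<1$. Then, since $u$ has positive entries, we get
\[
u^{\top}\bigl({\bf I}_{n}-\rho M^{\top}+C_Q(\rho)\bigr)\leq u^{\top}\bigl({\bf I}_{n}-\rho M^{\top}+\rho^2M\bigr)\leq0,
\]
and Theorem~\ref{theorem:return-obstruction} yields infinite-dimensionality of $\Jac(Q,W)$ for every field $K$ and every potential $W$.

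The key step is the bound $\ell_Q(a)\geq2$ for every arrow $a:i\to j$. Recall that $\ell_Q(a)=\operatorname{dist}_Q(j,i)$. The three possible small values are excluded as follows.
\begin{enumerate}
\item $\ell_Q(a)=0$ would force $i=j$, so $a$ would be a loop, which is excluded by $2$-acyclicity.
\item $\ell_Q(a)=1$ would give an arrow $j\to i$, which together with $a$ forms an oriented $2$-cycle, also excluded.
\item If $\ell_Q(a)=\infty$, the convention $\rho^\infty=0$ gives contribution $0\leq\rho^2$.
\end{enumerate}
Since $0<\rho<1$, the map $d\mapsto\rho^d$ is decreasing, so $\rho^{\ell_Q(a)}\leq\rho^2$ for every arrow.

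It remains to sum over arrows. By definition, $(C_Q(\rho))_{ij}$ is a sum over the arrows $i\to j$ of the terms $\rho^{\ell_Q(a)}$. There are exactly $m_{ij}$ such arrows, so $(C_Q(\rho))_{ij}\leq\rho^2 m_{ij}=(\rho^2M)_{ij}$, with matching index conventions. There is no real obstacle here. The only point to check is that the orientation conventions agree: $C_Q$ is indexed by (source, target), just like $M$, so the comparison is with $M$ and not $M^{\top}$.
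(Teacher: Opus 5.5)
Your proposal is correct and follows essentially the same route as the paper's proof. Both deduce $\ell_Q(a)\geq2$ from the absence of loops and oriented $2$-cycles, conclude $C_Q(\rho)\leq\rho^2M$ entrywise, and then use the positivity of $u$ to invoke Theorem~\ref{theorem:return-obstruction}; your explicit handling of the $\ell_Q(a)=\infty$ case and of the source/target index convention just spells out details the paper leaves implicit.
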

	
	\begin{proof}
		The absence of loops and oriented $2$-cycles gives
		$\ell_Q(a)\geq2$ for every arrow $a$.  Therefore
		$C_Q(\rho)\leq\rho^2M$, and \eqref{eq:coarse-obstruction} implies
		\eqref{eq:return-obstruction}.
	\end{proof}
	
	The sharper return-distance criterion and its coarser quadratic form will
	now be applied to three families of quivers.

\section{Consequences and examples}
\label{sec:examples}

We now verify the two classes listed in
Theorem~\ref{theorem:main-counterexamples}.  It is useful to begin with the
coarser quadratic criterion.  Although it does not use the precise return
distances, it already produces a large class of counterexamples and, at the
same time, isolates the role played by the local valency of the quiver.  We
then return to the sharper distance-sensitive criterion to obtain a sparser
nine-vertex example, and consider the quiver $Q_{2,n}$ in the second subsection.

\subsection{Regular quivers and a sparse nine-vertex example}

We first record the uniform consequence of
Corollary~\ref{corollary:coarse-obstruction} for regular quivers.

\begin{proposition}
	\label{proposition:regular-quivers}
	Let $Q$ be a finite $2$-acyclic quiver in which every vertex has indegree and
	outdegree $d$.  If $d\geq4$, then $\Jac(Q,W)$ is infinite-dimensional for
	every field $K$ and every potential $W$ on $Q$.
\end{proposition}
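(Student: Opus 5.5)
We will apply Corollary~\ref{corollary:coarse-obstruction} with the all-ones test vector $u=\one$ and a well-chosen value of $\rho$. The point is that $d$-regularity makes every row sum and every column sum of the adjacency matrix $M$ equal to $d$. Indeed, the $i$-th row sum of $M$ is the outdegree of $i$ and the $j$-th column sum is the indegree of $j$. Hence
\[
\one^{\top}M=d\,\one^{\top},\qquad \one^{\top}M^{\top}=(M\one)^{\top}=d\,\one^{\top}.
\]
Consequently the left side of \eqref{eq:coarse-obstruction} with $u=\one$ equals
\[
\one^{\top}\bigl({\bf I}_{n}-\rho M^{\top}+\rho^2M\bigr)=(1-d\rho+d\rho^2)\,\one^{\top}.
\]
So it suffices to find $\rho\in(0,1)$ with $f(\rho):=d\rho^2-d\rho+1\leq0$.

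Minimizing the quadratic, we take $\rho=1/2$, which lies in $(0,1)$. There $f(1/2)=1-d/4$, and this is $\leq0$ exactly when $d\geq4$. With $u=\one\in\mathbb R^n_{>0}$ and $\rho=1/2$, inequality \eqref{eq:coarse-obstruction} holds. Since $Q$ is $2$-acyclic by hypothesis, Corollary~\ref{corollary:coarse-obstruction} then shows that $\Jac(Q,W)$ is infinite-dimensional for every field $K$ and every potential $W$.

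There is no real obstacle here. The only care needed is with the transpose conventions: checking that $u^{\top}M^{\top}$ and $u^{\top}M$ both reduce to $d\,u^{\top}$ uses indegree and outdegree separately, which is why regularity in both directions is assumed. We also note that $d\geq4$ is sharp for this argument, since the discriminant of $f$ is $d^2-4d$, which is nonnegative exactly when $d\geq4$.
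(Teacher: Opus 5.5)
Your proposal is correct and follows the paper's own proof: apply Corollary~\ref{corollary:coarse-obstruction} with $u=\one$ and $\rho=1/2$, and use regularity to reduce the row vector to $(1-d/4)\one^{\top}\leq 0$. You also spell out that $\one^{\top}M$ uses indegrees while $\one^{\top}M^{\top}$ uses outdegrees, a detail the paper leaves implicit.
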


\begin{proof}
	Use Corollary~\ref{corollary:coarse-obstruction} with
	$\rho=1/2$ and $u=\one$.  The regularity assumptions give
	\[
	\one^{\top}
	\bigl({\bf I}_{n}-\rho M^{\top}+\rho^2M\bigr)
	=\left(1-\frac d2+\frac d4\right)\one^{\top}
	=\left(1-\frac d4\right)\one^{\top}\leq0.
	\qedhere
	\]
\end{proof}

\begin{remark}
The smallest quiver for which every potential yields an infinite-dimensional Jacobian algebra is as follows.
Let $Q$ be the oriented
	$3$-cycle with four parallel arrows on each edge, as shown in
	Figure~\ref{fig:four-regular-triangle}.  Then $\dim_K\Jac(Q,W)=\infty$
	for every potential $W$.  In terms of the quadratic estimate, this is the
	critical case: at $\rho=1/2$ one has $1-4\rho+4\rho^2=(1-2\rho)^2=0$.
\end{remark}

\begin{figure}[htbp]
	\centering
    \begin{tikzpicture}[
	scale=1.2,
	vertex/.style={
		font=\normalsize,
		inner sep=2pt,
		fill=white
	},
	arr/.style={
		-{Stealth[length=1.5mm,width=1.1mm]},
		draw=black,
		line width=0.7pt,
		shorten >=16pt,
		shorten <=16pt
	}
]

\def\r{2.5}

\node[vertex] (v1) at (90:\r) {$1$};
\node[vertex] (v2) at (-30:\r) {$2$};
\node[vertex] (v3) at (210:\r) {$3$};

\foreach \off in {-0.18,-0.06,0.06,0.18} {
	\draw[arr] ($(v1) + \off*(0.866,0.5)$) -- ($(v2) + \off*(0.866,0.5)$);
}

\foreach \off in {-0.18,-0.06,0.06,0.18} {
	\draw[arr] ($(v2) + \off*(0,-1)$) -- ($(v3) + \off*(0,-1)$);
}

\foreach \off in {-0.18,-0.06,0.06,0.18} {
	\draw[arr] ($(v3) + \off*(-0.866,0.5)$) -- ($(v1) + \off*(-0.866,0.5)$);
}

\node[vertex] at (v1) {$1$};
\node[vertex] at (v2) {$2$};
\node[vertex] at (v3) {$3$};

\end{tikzpicture}

	\caption{The oriented 3-cycle with four parallel arrows on each edge admits no potential for which the Jacobian algebra is finite-dimensional.}
	\label{fig:four-regular-triangle}
\end{figure}

\begin{remark}
	The preceding three-vertex example also helps locate the role of our
	inequality relative to existing results.  In the graded setting,
	Etingof and Eu proved a matrix-valued Golod--Shafarevich inequality over
	the semisimple vertex algebra
	$S=\bigoplus_{i\in Q_0}Ke_i$ for quadratic relations
	\cite[Theorem~2.3.4]{EtingofEu}.  Gaddis and Rogalski subsequently gave a
	matrix inequality for homogeneous relations on an arbitrary finite quiver
	\cite[Proposition~3.5]{GaddisRogalski}, with a specialization to
	derivation-quotient algebras defined by homogeneous superpotentials
	\cite[Corollary~3.6]{GaddisRogalski}.  Thus the multi-vertex graded
	situation is already well understood.
	
	On the other hand, the passage from homogeneous relations to
	$\mathfrak m$-adically completed, nonhomogeneous relations is classical
	in the one-vertex setting, beginning with Vinberg's extension of the
	Golod--Shafarevich theorem
	\cite[Theorems~1 and~2]{Vinberg}; see also
	\cite[Theorems~4.6 and~4.7]{BrownWemyss}.  More recently, Iyudu and
	Shkarin developed corresponding estimates adapted specifically to
	potential algebras and their completions \cite{IyuduShkarin}.
	These completed results, however, are formulated for a free algebra, or
	its power-series completion, on a single vertex.
	
	The point needed here is therefore not completion by itself, nor the
	matrix form of the graded Golod--Shafarevich inequality by itself, but
	the simultaneous treatment of both features.  Our matrix inequality works
	in the completed path algebra while retaining the two endpoint
	idempotents of each topological relation.  In particular, closure of the
	relation ideal does not destroy the vertex-by-vertex information needed
	for the deconcatenation argument.  This is what allows
	Proposition~\ref{proposition:regular-quivers} to apply to arbitrary formal
	potentials on the three-vertex quiver above, rather than only to
	homogeneous ones.
\end{remark}

The three-vertex example shows that no lower bound on the number of vertices
can be expected if parallel arrows are allowed.  The situation changes if one
requires the quiver to be simply-laced.  The next elementary observation shows
that, in this case, nine vertices are the first possible order at which the
coarser quadratic criterion can detect a counterexample.

\begin{proposition}
	\label{proposition:coarse-minimality}
	Let $Q$ be a simply-laced $2$-acyclic quiver with $n$ vertices.  If there are
	$0<\rho<1$ and $u\in\mathbb R_{>0}^{n}$ satisfying
	\eqref{eq:coarse-obstruction}, then $n\geq9$.
\end{proposition}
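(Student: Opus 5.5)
The plan is to test the column inequalities in \eqref{eq:coarse-obstruction} against the vector $u$ itself. This produces a quadratic form in $u$ in which the contributions $\rho^2M$ and $-\rho M^{\top}$ of each arrow combine into a single term. That term can then be compared with $\sum_j u_j^2$ using only the facts that $Q$ is simply-laced and $2$-acyclic.

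First I write the $j$-th entry of \eqref{eq:coarse-obstruction}. Since $(u^{\top}M^{\top})_j=\sum_i m_{ji}u_i$ and $(u^{\top}M)_j=\sum_i m_{ij}u_i$, it reads
\[
u_j-\rho\sum_{i}m_{ji}u_i+\rho^2\sum_i m_{ij}u_i\leq 0 .
\]
Next I multiply the $j$-th inequality by $u_j>0$ and sum over $j$. Each arrow $x\to y$ then contributes $\rho^2u_xu_y$ through the term $m_{xy}$ in row $y$, and $-\rho u_xu_y$ through the term $m_{xy}$ in row $x$. Hence
\[
\sum_j u_j^2\leq(\rho-\rho^2)\sum_{a:x\to y}u_xu_y .
\]
Now I use the hypotheses on $Q$. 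Because $Q$ is simply-laced and $2$-acyclic, each unordered pair $\{x,y\}$ of distinct vertices carries at most one arrow. Together with $\rho-\rho^2\leq\tfrac14$ and $u>0$, this gives
\[
\sum_j u_j^2\leq\frac14\sum_{x<y}u_xu_y=\frac18\Bigl(U^2-\sum_ju_j^2\Bigr),
\qquad U=\sum_j u_j .
\]
Therefore $9\sum_ju_j^2\leq U^2$. On the other hand, Cauchy--Schwarz gives $U^2\leq n\sum_ju_j^2$. Since $\sum_ju_j^2>0$, it follows that $n\geq9$.

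The only real decision in this argument is the choice of test vector. Summing the column inequalities with weight $\one$ leads only to the weak bound $\rho(n-1)\geq1$. Weighting them by $u$ is what makes each arrow's contribution a symmetric product $u_xu_y$ with the bounded coefficient $\rho-\rho^2$. Everything after that is routine. As a consistency check, all inequalities become equalities when $\rho=1/2$, $u=\one$, and the underlying graph is complete and $4$-regular on nine vertices. This matches the claim that nine is the threshold.
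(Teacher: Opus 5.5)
Your proof is correct and follows essentially the same route as the paper: pairing the column inequalities with $u$ itself yields $u^{\top}\bigl({\bf I}_{n}-\frac{\rho(1-\rho)}{2}(M+M^{\top})\bigr)u\leq0$, exactly as in the paper. The only difference is in the last step. The paper bounds the form using $\lVert M+M^{\top}\rVert_2\leq n-1$ (spectral radius at most the maximum degree), whereas you use positivity of $u$ to compare with the complete graph and then apply Cauchy--Schwarz; this is an elementary proof of the same bound for positive vectors.
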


\begin{proof}
	Let $A_Q=M+M^{\top}$.  Because $Q$ is simply-laced and $2$-acyclic,
	$A_Q$ is the adjacency matrix of a simple undirected graph.  From
	\eqref{eq:coarse-obstruction} and $u>0$ one obtains
	\[
	0\geq
	u^{\top}
	\bigl({\bf I}_{n}-\rho M^{\top}+\rho^2M\bigr)u.
	\]
	Since $u^{\top}Mu=u^{\top}M^{\top}u$, the right-hand side is
	\[
	u^{\top}
	\left(
	{\bf I}_{n}-\frac{\rho(1-\rho)}2A_Q
	\right)u.
	\]
	Since $A_Q$ is symmetric, its Euclidean operator norm
	$\lVert A_Q\rVert_2$ equals its spectral radius; this is at most the
	maximum degree and hence at most $n-1$.  Also
	$\rho(1-\rho)\leq1/4$.  If $n\leq8$, then
	\[
	\frac{\rho(1-\rho)}2\,\lVert A_Q\rVert_2
	\leq\frac{n-1}{8}<1,
	\]
	so the displayed quadratic form is positive definite, a contradiction.
\end{proof}

\begin{figure}[htbp]
	\centering
	\begin{tikzpicture}[
		scale=1.2,
		vertex/.style={
			font=\small,
			inner sep=1.5pt,
			fill=white
		},
		arr/.style={
			-{Stealth[length=2.1mm,width=1.5mm]},
			draw=black,
			line width=0.8pt
		}
		]
		
		\def\r{2.8}
		
		\foreach \i in {0,...,8} {
			\node[vertex] (v\i) at ({90-40*\i}:\r) {$\i$};
		}
		
		\foreach \i in {0,...,8} {
			\pgfmathtruncatemacro{\j}{mod(\i+3,9)}
			\draw[arr] (v\i) -- (v\j);
		}
		
		\foreach \i in {0,...,8} {
			\pgfmathtruncatemacro{\j}{mod(\i+2,9)}
			\draw[arr] (v\i) -- (v\j);
		}
		
		\foreach \i in {0,...,8} {
			\pgfmathtruncatemacro{\j}{mod(\i+1,9)}
			\draw[arr] (v\i) -- (v\j);
		}
		
		\foreach \i in {0,...,8} {
			\node[vertex] at (v\i) {$\i$};
		}
		
	\end{tikzpicture}
	\caption{The quiver $Q_{3,9}$, where for every vertex $i\in\mathbb Z/9\mathbb Z$,
		there are arrows from $i$ to $i+1$, $i+2$, and $i+3$.}
	\label{fig:q-nine}
\end{figure}

\begin{example}

The sharper return-distance criterion detects the quiver
$Q=Q_{3,9}$ depicted in Figure~\ref{fig:q-nine}, which has vertex set
$\mathbb Z/9\mathbb Z$ and arrows $i\longrightarrow i+s$ for each
$i\in\mathbb Z/9\mathbb Z$ and $s\in\{1,2,3\}$.
Indeed, $Q_{3,9}$ is $2$-acyclic and, for arrows of steps $1$, $2$,
and $3$, the values of $\ell_Q$ are respectively $3$, $3$, and $2$.

Now take $\rho=1/2$ and $u=\one$. Translation invariance gives $\one^{\top}M^{\top}=3\one^{\top}$, while the three return distances give
	\[
	\one^{\top}C_Q(1/2)
	=
	\left(
	\left(\frac12\right)^3
	+\left(\frac12\right)^3
	+\left(\frac12\right)^2
	\right)\one^{\top}
	=
	\frac12\one^{\top}.
	\]
	Consequently,
	\[
	\one^{\top}
	\bigl(
	{\bf I}_{9}-\tfrac12M^{\top}+C_Q(1/2)
	\bigr)
	=0.
	\]
	Theorem~\ref{theorem:return-obstruction} therefore applies and shows that
	$\Jac(Q_{3,9},W)$ is infinite-dimensional for every $W$.
\end{example}

\subsection{An infinite family of simply-laced 2-in/2-out quivers}

The examples above illustrate two ways in which the obstruction can occur:
the regular construction allows parallel arrows, while the nine-vertex
quiver $Q_{3,9}$ is simply-laced. We now construct an
infinite family of simply-laced quivers admitting no Jacobi-finite potential.

For $n\geq5$, let $Q=Q_{2,n}$ be the quiver with vertex set
$\mathbb Z/n\mathbb Z$ and arrows
\begin{equation*}
	i\longrightarrow i+1,
	\qquad
	i\longrightarrow i+2
	\qquad(i\in\mathbb Z/n\mathbb Z).
\end{equation*}
See Figure~\ref{fig:q-n} for an example when $n=8$.

\begin{figure}[htbp]
	\centering
	\begin{tikzpicture}[
		scale=1.2,
		vertex/.style={
			font=\small,
			inner sep=1.5pt,
			fill=white
		},
		arr/.style={
			-{Stealth[length=2.1mm,width=1.5mm]},
			draw=black,
			line width=0.8pt
		}
		]
		
		\def\N{8}
		\def\r{2.8}
		\pgfmathtruncatemacro{\NmOne}{\N-1}
		
		\foreach \i in {0,...,\NmOne} {
			\node[vertex] (v\i) at ({90-360/\N*\i}:\r) {$\i$};
		}
		
		\foreach \i in {0,...,\NmOne} {
			\pgfmathtruncatemacro{\j}{mod(\i+2,\N)}
			\draw[arr] (v\i) -- (v\j);
		}
		
		\foreach \i in {0,...,\NmOne} {
			\pgfmathtruncatemacro{\j}{mod(\i+1,\N)}
			\draw[arr] (v\i) -- (v\j);
		}
		
		\foreach \i in {0,...,\NmOne} {
			\node[vertex] at (v\i) {$\i$};
		}
		
	\end{tikzpicture}
	\caption{The quiver $Q_{2,n}$, illustrated for $n=8$.  For every
		$i\in\mathbb Z/n\mathbb Z$, there are arrows from $i$ to $i+1$ and
		$i+2$.}
	\label{fig:q-n}
\end{figure}

\begin{proposition}
	\label{proposition:q-n}
	If $n\geq10$, then $\Jac(Q_{2,n},W)$ is infinite-dimensional for every field
	$K$ and every potential $W$ on $Q_{2,n}$.
\end{proposition}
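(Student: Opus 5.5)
The plan is to apply Theorem~\ref{theorem:return-obstruction} with $u=\one$ and a single value of $\rho$ that works for all $n\geq10$. Translation invariance on $\mathbb Z/n\mathbb Z$ will reduce the matrix inequality~\eqref{eq:return-obstruction} to a scalar polynomial inequality in $\rho$.

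First I check that $Q_{2,n}$ is $2$-acyclic for $n\geq5$. There are no loops. An oriented $2$-cycle would need steps $s,s'\in\{1,2\}$ with $s+s'\equiv0\pmod n$, which forces $n\leq4$.

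Next I compute the return distances. Every directed path only moves forward by steps $1$ or $2$. A path from $j$ to $i$ therefore has total displacement congruent to $i-j$ modulo $n$, and the shortest such path has length $\lceil k/2\rceil$, where $k\in\{1,\dots,n-1\}$ is the residue of $i-j$. For an arrow $i\to i+1$ the return displacement is $n-1$, so $\ell_Q=\lceil (n-1)/2\rceil$. For an arrow $i\to i+2$ it is $n-2$, so $\ell_Q=\lceil (n-2)/2\rceil$. I should check that a path could not be shorter by wrapping around the circle more than once. This is clear, since wrapping adds at least $n$ to the displacement and hence at least $n/2$ to the length.

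Now I evaluate the row vector $\one^{\top}({\bf I}_n-\rho M^{\top}+C_Q(\rho))$. Its $j$-th entry is
\[
1-\rho\,(\text{outdegree of }j)+\sum_{a:\,t(a)=j}\rho^{\ell_Q(a)}.
\]
Each vertex has outdegree $2$ and receives exactly one arrow of each step. The entry is therefore
\[
f_n(\rho)=1-2\rho+\rho^{\lceil (n-1)/2\rceil}+\rho^{\lceil (n-2)/2\rceil}.
\]
For $n\geq10$ both exponents are at least those of the case $n=10$, namely $5$ and $4$. Since $0<\rho<1$, this gives $f_n(\rho)\leq f_{10}(\rho)=1-2\rho+\rho^4+\rho^5$. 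It remains to exhibit one $\rho$ with $f_{10}(\rho)\leq0$. The choice $\rho=1/2$ fails, since it gives $3/32>0$. I therefore take $\rho=13/20$, where
\[
f_{10}(0.65)\approx1-1.3+0.1785+0.1160<0.
\]
A rational $\rho$ keeps the verification exact.

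The main obstacle is only the correctness of the distance computation and this single numerical check; everything else is formal. Theorem~\ref{theorem:return-obstruction} then gives $\dim_K\Jac(Q_{2,n},W)=\infty$ for every field $K$ and every potential $W$.
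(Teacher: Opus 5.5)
Your proposal is correct and takes the paper's approach: $u=\one$, translation invariance, the return distances $\lceil (n-1)/2\rceil$ and $\lceil (n-2)/2\rceil$, monotonicity in $n$, and a single scalar check at $n=10$. The only difference is that you take $\rho=13/20$ where the paper takes $\rho=2/3$; both work, and your check can be done exactly, since $f_{10}(13/20)=-17487/3200000<0$, compared with $-1/243$ at $\rho=2/3$.
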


\begin{proof}
	When $n\geq5$, the quiver $Q_{2,n}$ is $2$-acyclic and every vertex has
	two incoming and two outgoing arrows.  The directed distances
	$\ell_Q(a)$ for arrows of steps $1$ and $2$ are
	\[
	L_1(n)=\left\lceil\frac{n-1}{2}\right\rceil,
	\qquad
	L_2(n)=\left\lceil\frac{n-2}{2}\right\rceil.
	\]
	Indeed, a directed path from $i+s$ back to $i$ corresponds to a sum of
	$1$'s and $2$'s congruent to $n-s$ modulo $n$, and the least possible
	positive sum is $n-s$.  The minimum number of summands is therefore the
	corresponding ceiling displayed above.
	
	Take $\rho=2/3$ and $u=\one$.  At $n=10$ one has
	$L_1(10)=5$ and $L_2(10)=4$, so
	\begin{equation}
		\label{eq:q-ten-scalar}
		1-2\rho+\rho^{L_1(10)}+\rho^{L_2(10)}
		=
		1-\frac43+\left(\frac23\right)^5
		+\left(\frac23\right)^4
		=
		-\frac1{243}<0.
	\end{equation}
	Both directed distances are nondecreasing in $n$.  Hence the left-hand
	side of \eqref{eq:q-ten-scalar}, with $10$ replaced by $n$, remains
	negative for every $n\geq10$.  Translation invariance now gives
	\[
	\one^{\top}
	\bigl({\bf I}_{n}-\rho M^{\top}+C_Q(\rho)\bigr)<0
	\]
	entrywise.  Theorem~\ref{theorem:return-obstruction} applies.
\end{proof}

\begin{remark}
	Proposition~\ref{proposition:q-n} shows that neither parallel arrows nor
	high valency are responsible for the obstruction.  Each $Q_{2,n}$ is
	simply-laced and $2$-in/$2$-out, whereas in the regular case the coarser
	quadratic criterion of
	Proposition~\ref{proposition:regular-quivers} only begins at degree $4$.
	The return distances therefore contain genuinely finer information than
	the local arrow counts alone.
\end{remark}

Together, Proposition~\ref{proposition:regular-quivers} and
Proposition~\ref{proposition:q-n} prove
Theorem~\ref{theorem:main-counterexamples}.

    \section*{Acknowledgment}
    The first author is supported by the Fundamental Research Funds for the Central Universities (No. GK202403003), the NSF of China (Grant No. 12271321, 12671047). 
    
	\section*{Statement on AI usage}
	
	We acknowledge the use of AI during the preparation of this
	manuscript. The overall strategy of the paper, including the
	matrix-valued Golod--Shafarevich--Vinberg framework, the return-distance
	obstruction, and the construction of a coarser quadratic criterion, was developed
	by the authors.
	
	For the infinite family considered in
	Proposition~\ref{proposition:q-n}, the authors proposed studying the
	circulant quivers with arrows $i\longrightarrow i+1$ and
	$i\longrightarrow i+2$, while ChatGPT assisted with the parameter search
	that led to the choice \(\rho=2/3\). The resulting arguments and
	calculations were subsequently checked in detail, revised, and integrated
	into the manuscript by the authors. ChatGPT was also used for language
	editing. The authors have reviewed the manuscript in full and take full
	responsibility for all mathematical claims, arguments, and conclusions,
	as well as for any remaining errors.


\begin{thebibliography}{GLM22}
\raggedright
		
		\bibitem[A09]{Amiot}
		C.~Amiot,
		\emph{Cluster categories for algebras of global dimension $2$ and quivers
			with potential},
		Ann. Inst. Fourier (Grenoble) \textbf{59} (2009), no.~6, 2525--2590.
		
		\bibitem[BW25]{BrownWemyss}
		G.~Brown and M.~Wemyss,
		\emph{Local normal forms of noncommutative functions},
		Forum Math. Pi \textbf{13} (2025), Paper No.~e8, 59 pp.
		
		\bibitem[CG24]{CasalsGao}
		R.~Casals and H.~Gao,
		\emph{A Lagrangian filling for every cluster seed},
		Invent. Math. \textbf{237} (2024), no.~2, 809--868.
		
		\bibitem[CZ23]{ChangZhang}
		W.~Chang and J.~Zhang,
		\emph{Quivers with potentials for Grassmannian cluster algebras},
		Canad. J. Math. \textbf{75} (2023), no.~4, 1199--1225.
		
		\bibitem[DWZ08]{DerksenWeymanZelevinsky}
		H.~Derksen, J.~Weyman and A.~Zelevinsky,
		\emph{Quivers with potentials and their representations I: Mutations},
		Selecta Math. (N.S.) \textbf{14} (2008), no.~1, 59--119.
		
		\bibitem[EE07]{EtingofEu}
		P.~Etingof and C.-H.~Eu,
		\emph{Koszulity and the Hilbert series of preprojective algebras},
		Math. Res. Lett. \textbf{14} (2007), no.~4, 589--596.
		
		\bibitem[FST12]{FeliksonShapiroTumarkin}
		A.~Felikson, M.~Shapiro and P.~Tumarkin,
		\emph{Skew-symmetric cluster algebras of finite mutation type},
		J. Eur. Math. Soc. (JEMS) \textbf{14} (2012), no.~4, 1135--1180.
		
		\bibitem[FST08]{FominShapiroThurston}
		S.~Fomin, M.~Shapiro and D.~Thurston,
		\emph{Cluster algebras and triangulated surfaces. Part I: Cluster
			complexes},
		Acta Math. \textbf{201} (2008), no.~1, 83--146.
		
		\bibitem[FZ02]{FominZelevinsky}
		S.~Fomin and A.~Zelevinsky,
		\emph{Cluster algebras I: Foundations},
		J. Amer. Math. Soc. \textbf{15} (2002), no.~2, 497--529.
		
		\bibitem[GR21]{GaddisRogalski}
		J.~Gaddis and D.~Rogalski,
		\emph{Quivers supporting twisted Calabi--Yau algebras},
		J. Pure Appl. Algebra \textbf{225} (2021), no.~9,
		Paper No.~106645, 33 pp.
		
		\bibitem[GLS16]{GeissLabardiniSchroer}
		C.~Gei{\ss}, D.~Labardini-Fragoso and J.~Schr\"oer,
		\emph{The representation type of Jacobian algebras},
		Adv. Math. \textbf{290} (2016), 364--452.
		
		\bibitem[GLM22]{GeuenichLabardiniMiranda}
		J.~Geuenich, D.~Labardini-Fragoso and J.~L.~Miranda-Olvera,
		\emph{Quivers with potentials associated to triangulations of closed surfaces
			with at most two punctures},
		S\'em. Lothar. Combin. \textbf{84} (2022), Art.~B84c, 21 pp.
		
		\bibitem[G06]{Ginzburg}
		V.~Ginzburg,
		\emph{Calabi--Yau algebras},
		preprint, arXiv:math/0612139 (2006).
		
		\bibitem[G64]{Golod}
		E.~S.~Golod,
		\emph{On nil-algebras and finitely approximable $p$-groups},
		Izv. Akad. Nauk SSSR Ser. Mat. \textbf{28} (1964), no.~2, 273--276.
		
		\bibitem[GS64]{GolodShafarevich}
		E.~S.~Golod and I.~R.~Shafarevich,
		\emph{On the class field tower},
		Izv. Akad. Nauk SSSR Ser. Mat. \textbf{28} (1964), no.~2, 261--272.
		
		\bibitem[HY25]{HaerizadehYurikusa}
		M.~Haerizadeh and T.~Yurikusa,
		\emph{Finite-dimensional Jacobian algebras: Finiteness and tameness},
		preprint, arXiv:2507.04570 (2025).
		
		\bibitem[IS22]{IyuduShkarin}
		N.~Iyudu and S.~Shkarin,
		\emph{Golod--Shafarevich--Vinberg type theorems and finiteness conditions
			for potential algebras},
		preprint, arXiv:2201.04479 (2022).
		
		\bibitem[IS19]{IyuduSmoktunowicz}
		N.~Iyudu and A.~Smoktunowicz,
		\emph{Golod--Shafarevich-type theorems and potential algebras},
		Int. Math. Res. Not. IMRN (2019), no.~15, 4822--4844.
		
		\bibitem[KY11]{KellerYang}
		B.~Keller and D.~Yang,
		\emph{Derived equivalences from mutations of quivers with potential},
		Adv. Math. \textbf{226} (2011), no.~3, 2118--2168.
		
		\bibitem[L09]{LabardiniFragoso}
		D.~Labardini-Fragoso,
		\emph{Quivers with potentials associated to triangulated surfaces},
		Proc. Lond. Math. Soc. (3) \textbf{98} (2009), no.~3, 797--839.
		
		\bibitem[L16]{LabardiniFragosoIV}
		D.~Labardini-Fragoso,
		\emph{Quivers with potentials associated to triangulated surfaces, Part IV:
			Removing boundary assumptions},
		Selecta Math. (N.S.) \textbf{22} (2016), no.~1, 145--189.
		
		\bibitem[L12]{LadkaniClosedSurfaces}
		S.~Ladkani,
		\emph{On Jacobian algebras from closed surfaces},
		preprint, arXiv:1207.3778 (2012).
		
		\bibitem[L13]{LadkaniClassP}
		S.~Ladkani,
		\emph{On cluster algebras from once punctured closed surfaces},
		preprint, arXiv:1310.4454 (2013).
		
		\bibitem[L25]{LadkaniXSeven}
		S.~Ladkani,
		\emph{Non-degenerate potentials on the quiver $X_7$},
		J. Algebra \textbf{666} (2025), 94--148.
		
		\bibitem[LP24]{LiPeng}
		Y.~Li and L.~Peng,
		\emph{Finite dimensional $2$-cyclic Jacobian algebras},
		preprint, arXiv:2408.10056 (2024).
		
		\bibitem[S23]{SchroerAtlas}
		J.~Schr\"oer,
		\emph{Atlas of finite-dimensional algebras},
		manuscript, version of 29 December 2023.
		
		\bibitem[TV17]{TrepodeValdivieso}
		S.~Trepode and Y.~Valdivieso-D\'iaz,
		\emph{On finite dimensional Jacobian algebras},
		Bol. Soc. Mat. Mex. (3) \textbf{23} (2017), no.~2, 653--666.
		
		\bibitem[V65]{Vinberg}
		E.~B.~Vinberg,
		\emph{On the theorem concerning the infinite-dimensionality of an
			associative algebra},
		Izv. Akad. Nauk SSSR Ser. Mat. \textbf{29} (1965), no.~1, 209--214;
		English transl., Amer. Math. Soc. Transl. (2) \textbf{82} (1969), 237--242.
		
	\end{thebibliography}
\end{document}